\documentclass[11pt, a4paper]{article}
\usepackage[margin=1in, top=1.5in, bottom=1.5in]{geometry}
\usepackage{subcaption}
\usepackage{graphicx}
\usepackage{float}
\usepackage{titlesec}
\usepackage{amsmath}
\usepackage{amssymb}
\usepackage{amsthm}
\usepackage{cases}
\usepackage[english]{babel}
\usepackage{hyperref}
\usepackage{enumitem}
\usepackage{authblk}
\usepackage{dsfont}
\usepackage{sectsty}
\usepackage{xcolor}

\usepackage{todonotes}


\theoremstyle{break}
	\newtheorem{thm}{Theorem}[section]
\theoremstyle{break}
    \newtheorem{prp}{Proposition}[section]
\theoremstyle{break}
	\newtheorem{asm}{Assumption}[section]
\theoremstyle{break}
	\newtheorem{rem}{Remark}[section]
\theoremstyle{break}
	\newtheorem{lem}{Lemma}[section]
\theoremstyle{break}
	
    \theoremstyle{break}
	\newtheorem{con}{Conclusion}[section]
\theoremstyle{break}
	
\theoremstyle{break}
	\newtheorem{expe}{Experiment}[section]
\theoremstyle{break}
    \newtheorem{main_contrib}{Contribution}


\makeatletter
\newcommand{\mylabel}[2]{#2\def\@currentlabel{#2}\label{#1}}
\makeatother




\counterwithin{equation}{section}

\title{Extinction and persistence criteria in non-local Klausmeier model of vegetation dynamics on flat landscapes}

\author[1,3]{Maciej Tadej \thanks{maciej.tadej@math.uni.wroc.pl}}
\author[1,2,4]{Ricardo Martinez-Garcia \thanks{r.martinez-garcia@hzdr.de}}
\author[1,3]{Michael Hecht \thanks{m.hecht@hzdr.de}}

\affil[1]{\textit{Center for Advanced Systems Understanding (CASUS) -- Helmholtz-Zentrum
    Dresden-Rossendorf (HZDR), Untermarkt 20, G\"orlitz 02826, Germany}}
\affil[2]{\textit{ICTP South American Institute for Fundamental Research \& Instituto de F\'isica Te\'orica, Universidade
Estadual Paulista - UNESP, São Paulo SP, Brazil}}
\affil[3]{\textit{Institute of Mathematics, University of Wrocław, pl. Grunwaldzki 2, 50-384 Wrocław, Poland}}
\affil[4]{\textit{Department of Ecology, Institute of Biosciences, University of S\~ao Paulo, S\~ao Paulo, R. do Mat\~ao, 321 - Butant\~a, 05508-090 , S\~ao Paulo, Brazil}}

\date{}

\begin{document}

\maketitle

\hspace{20pt}

\begin{abstract}
This paper investigates the dynamics of vegetation patterns in water-limited ecosystems using a generalized Klausmeier model that incorporates non-local plant dispersal within a finite habitat. We establish the well-posedness of the system and provide a rigorous analysis of the conditions required for vegetation survival. Our results identify a critical patch size governed by the trade-off between local growth and boundary losses; habitats smaller than this threshold lead to inevitable extinction. Furthermore, we derive a critical maximal biomass density below which the population collapses to a desert state, regardless of the domain size. We determine stability criteria for stationary solutions and describe the emergence of stable, non-trivial biomass distributions. Numerical experiments comparing sub-Gaussian and super-Gaussian kernels confirm that non-local dispersal mechanisms, particularly those with fat tails, enhance ecosystem resilience by allowing vegetation to persist in smaller, fragmented habitats than predicted by classical local diffusion models.
\end{abstract}

\begin{keywords}
non-local dispersal, vegetation patterns, critical patch size, Klausmeier model, reaction-diffusion systems, finite habitat
\end{keywords}

\newpage

\section{Introduction}

Water-limited ecosystems often exhibit a variety of regular vegetation patterns \cite{Rietkerk2008,Tarnita2024,Martinez-Garcia2022} resulting from the interplay between various water-vegetation feedbacks acting at different scales \cite{Borgogno2009,Martinez-Garcia2014,Meron2018}. Importantly, these patterns often form in response to increasingly harsh conditions and can therefore be important for ecosystem survival and persistence \cite{Meron2016}. Due to the long time scales at which vegetation patterns form and change, mathematical modeling has been an essential tool to better understand how patterns form and how they affect ecosystem dynamics \cite{Meron2016,Martinez-Garcia2023}. 

Different modeling efforts have, for example, suggested that pattern shapes could be used as early indicators of impending desertification processes, as water-stressed vegetation consistently forms hexagonally distributed spots in hyper-arid conditions \cite{Lefever1997,VonHardenberg2001,Meron2004,Rietkerk2002}. More recent results indicate that patterns could alternatively enhance ecosystem resilience by allowing ecosystems to avoid sudden collapses through spatial reshaping of the vegetation cover \cite{Rietkerk2021}. Mathematical modeling has thus made key contributions to our understanding of self-organization in water-limited ecosystems. However, most of these models are formulated in ideal conditions, assuming infinite domains, and thus neglecting how more realistic boundary conditions could change both spatial patterns and vegetation survival (but see, e.g., \cite{Clerc2021,PintoRamos2023,PintoRamos2025}).

In less applied scenarios, researchers have studied population dynamics in bounded domains, mainly using extensions of the Fisher–KPP equation \cite{Cantrell1999,Fagan2009,Colombo2018,Dornelas2024} known as KISS models in the mathematical biology literature \cite{Skellam1951,Kierstead1953}. KISS models show that population survival in confined domains---or habitats in ecological terms---requires that the domain is large enough so that reproduction within the domain balances population loss due to density fluxes through the domain edges \cite{Cantrell2004}. For domains smaller than this critical size, population extinction is the only stable solution. A key difference between these studies and vegetation-pattern models is that the latter often present non-local terms that account for long-range processes responsible for vegetation growth (e.g., resource intake via roots or sibling dispersal \cite{Martinez-Garcia2014,Eigentler2018}). Therefore, models for vegetation pattern formation provide an ideal scenario to extend KISS models to cases in which population dynamics are spatially non-local. 

We investigate the relationship between non-local population dynamics in finite habitats and population survival. To establish this connection, we use a generalized formulation of Klausmeier's model for vegetation pattern formation in flat landscapes that accounts for non-local plant dispersal \cite{Eigentler2018,Klausmeier1999}. We extend this formulation to consider the more realistic case in which vegetation can only grow in a finite region that is always surrounded by desert. 

Our approach is novel, but some progress has already been made. For instance, in \cite{JaramilloMeraz2024,CappaneraJaramilloWard2024} the authors constructed weak, local-in-time solutions. In a similar framework, though not involving the water density effects directly, the authors have shown several important properties such as large-time asymptotics, existence of non-trivial stable patterns, and a critical domain size, see \cite{Tadej2025,BaiLiWang2025,ReimerBonsallMaini2015,Sun2023} and references therein. When the local diffusion of water is replaced by a non-local operator it is known that there is a unique, global-in-time solution \cite{LaurencotWalker2023}.

\subsection{The non-local Klausmeier model}

The classic Klausmeier model for vegetation patterns describes the coupled dynamics of the vegetation biomass $v := v(x,t)$ and the water concentration $w := w(x,t)$ as a reaction-diffusion system. 
Here, we extend the dimensionless form \cite{Eigentler2018} to the finite habitat case:  
\begin{numcases}{}
    v_t = d_v\mathcal{L} v + v^2 w - B v & \hspace{2mm} $(x, t) \in \Omega \times [0, T],$ \label{eq:u} \\
    w_t = d_w\Delta w - v^2w - w + A & \hspace{2mm} $(x, t) \in \Omega \times [0, T],$ \label{eq:v} \\
    v = 0 & \hspace{2mm} $(x,t) \in \mathbb{R}^n \setminus \Omega  \times [0,T], $ \label{eq:u_bound} \\
    w = 0 & \hspace{2mm} $(x,t) \in \partial\Omega \times [0,T],$ \label{eq:v_bound} \\
    v(0) = v_0 & \hspace{2mm} $x \in \Omega ,$ \label{eq:Unit} \\
    w(0) = w_0 & \hspace{2mm} $x \in \Omega\,, $ \label{eq:v_init}
\end{numcases}
where $T > 0$ is the evolution time, $\Omega \subset \mathbb{R}^n$ an open and bounded domain, 
 $d_v > 0$ models plant dispersal, $d_w > 0$ the water diffusion, $A > 0$ the average rainfall, and $B > 0$ the mortality. 

In order to render the model more realistic we consider a non-local dispersal operator $\mathcal{L}$, given as 
\begin{equation}
    \label{eq:DispersalOperatorDefinition_1}
    \mathcal{L} v(x) = \mathds{1}_{\Omega}(x)\int_{\mathbb{R}^n} J(x-y)\left(v(y) - v(x)\right)\:dy\,.
\end{equation}
Here, $\mathds{1}_\Omega$ is the characteristic function of the domain $\Omega$ and $J : \mathbb{R}^n \rightarrow \mathbb{R}_{\geq 0}$ is an integral kernel fulfilling the assumptions listed below.

\begin{asm}
\label{assumptions_general_kernel}
We assume the integral kernel $J \in C^0(\mathbb{R}^n)$ satisfies the following properties:
\begin{enumerate}[label=\textnormal{(\arabic*)}, leftmargin=2em]
    \item[\mylabel{J1}{(J1)}] \textbf{Positivity:} 
    $J(z) \geq 0$ for all $z \in \mathbb{R}^n$, and $J(0) > 0$. 

    \item[\mylabel{J2}{(J2)}] \textbf{Symmetry:} 
    $J(z) = J(-z)$ for all $z \in \mathbb{R}^n$. 

    \item[\mylabel{J3}{(J3)}] \textbf{Decay:} 
    $J(z) \to 0$ as $|z| \to \infty$. 

    \item[\mylabel{J4}{(J4)}] \textbf{Finite second moment:} $\int_{\mathbb{R}^n} J(z)\,|z|^2 \,dz < \infty.$

    \item[\mylabel{J5}{(J5)}] \textbf{Normalization:} $\int_{\mathbb{R}^n} J(z)\,dz = 1.$
\end{enumerate}
\end{asm}

Assumption \ref{J5} and the non-local Dirichlet boundary condition \eqref{eq:u_bound} allow us to rewrite \eqref{eq:DispersalOperatorDefinition_1} as
\begin{equation}
    \label{EQ:DispersalOperatorDefinition_2}
    \mathcal{L} v(x) = \mathds{1}_{\Omega}(x) \left(\int_{\Omega} J(x-y) v(y)\:dy - v(x)\right).
\end{equation}

We furthermore impose the following constraints on the domain $\Omega$.

\begin{asm}
\label{assumptions_domain}
We assume that $\Omega \subset \mathbb{R}^n$ is bounded, open and satisfies the following properties:
\begin{enumerate}[label=\textnormal{(\arabic*)}, leftmargin=2em]
    \item[\mylabel{O1}{(O1)}] \textbf{Uniform estimates:} 
    Let $\{\phi_j\}_{j \in \mathbb{N}}$ be the eigenfunctions of $\Delta$ with Dirichlet boundary conditions. There exists a constant $C := C(\Omega) > 0$ such that $\sup_{j \in \mathbb{N}}\|\phi_j\|_\infty \leq C$.

    \item[\mylabel{O2}{(O2)}] \textbf{Boundary regularity:} 
    The boundary of $\Omega$ is sufficiently regular, i.e.\ $\partial\Omega \in C^k$ with suitable $k \in \mathbb{N}$.
\end{enumerate}
\end{asm}

For the convenience of the reader, we also provide a summary of the notation used throughout this article.

\begin{main_contrib}[Critical Patch Size and Kernel Shape]
We derive an analytical criterion for vegetation persistence based on the spectral properties of the dispersal operator. Specifically, as shown in Lemma \ref{lem:apriori_smallnes_d_v}, extinction is guaranteed if the dispersal rate $d_v$ exceeds a threshold $M/\beta_1$, where $M$ is related to the maximum growth rate and $\beta_1$ is the principal eigenvalue of the operator $-\mathcal{L}$ on $\Omega$. Since $\beta_1$ scales inversely with the domain size (see Remark \ref{rem:dispersal_patch_condition}), this establishes a \textit{critical patch size} for persistence. 

Crucially, our numerical analysis reveals that this critical size is sensitive to the shape of the dispersal kernel. We show that non-local kernels with heavier tails (i.e., higher kurtosis) significantly reduce the critical patch size compared to the classical Laplacian diffusion (see Figure \ref{fig:critical_patch_size}, left panel). This implies that long-range dispersal mechanisms enhance ecosystem resilience in fragmented landscapes.
\end{main_contrib}

\begin{main_contrib}[Minimum Biomass Threshold for Persistence]
We identify a critical threshold for the initial biomass density required to prevent ecosystem collapse. Our stability analysis, formalized in Theorem \ref{thm:Exinction-Of-Vegetation}, demonstrates that if the maximal biomass density drops below a critical level, approximately scaled by the ratio of mortality to rainfall, $\|V\|_\infty \sim B/A$, then the system is attracted to the trivial desert state regardless of the habitat size. This theoretical lower bound is numerically confirmed by the bifurcation diagrams in Figures \ref{fig:bifurcation_diagram_slow} and \ref{fig:bifurcation_diagram_fast}, which show that no stable non-trivial solutions exist below this biomass threshold.
\end{main_contrib}

\begin{main_contrib}[Existence and Stability of Non-Uniform Patterns]
We prove the existence of stable, non-trivial stationary solutions $V(x)$ that bifurcate from the spatially uniform states of the kinetic system.
\begin{itemize}
    \item \textbf{Existence Condition:} Non-deserted states emerge only when the environmental stress is low enough, specifically when rainfall $A$ and mortality $B$ satisfy $A > 2B$. Under this condition, and ensuring the reaction kinetics satisfy specific bounds (established in Proposition \ref{prp:nonlinearity_f_properties_at_v_star}), Theorem \ref{thm:Stationary-Solutions-Existence} guarantees the existence of a stationary solution $V$ in the neighborhood of the uniform state $v_*$.
    \item \textbf{Stability Regime:} By analyzing the linearized operator in Theorem \ref{thm:Stationary-Solutions-Stability}, we show that these spatially structured solutions are linearly exponentially stable, provided that the transport parameters $d_v$ and $d_w$ are sufficiently small.
\end{itemize}
Unlike the classical reaction-diffusion model, which predicts smooth profiles, the non-local model supports stable solutions characterized by sharp biomass gradients at the habitat boundaries (as visualized in the right panel of Figure \ref{fig:critical_patch_size}), offering a distinct morphological signature of non-local interactions.
\end{main_contrib}

\subsection{Structure}
The remainder of this article is organized as follows. 
In Section 2, we establish the well-posedness of the non-local Klausmeier model by proving the local-in-time existence and uniqueness of solutions to the problem (1.1)--(1.6). We also demonstrate fundamental properties such as the non-negativity of solutions and the existence of an invariant region. 
Section 3 contains the core mathematical analysis of this paper, where we provide a rigorous proof for the existence of non-trivial stationary solutions. 
In Section 4, we investigate the long-term behavior of these solutions by deriving criteria for both the extinction of vegetation and the persistence of these equilibria through a stability analysis. 
Section 5 is devoted to numerical experiments designed to validate and illustrate our theoretical findings. We present two main computational studies: the first investigates the critical patch size for different dispersal kernels, while the second explores the system's bifurcation structure. 
Finally, in Section 5.1, we summarize our main conclusions and outline potential directions for future research.

\section{Existence of solutions}
In order to show that the non-local Klausmeier model \eqref{eq:u}--\eqref{eq:v_init} is well-posed we start by constructing local-in-time solutions. 

\subsection{Local-in-time solutions} 
We denote with 
\begin{equation}
    \label{EQ:C_0_OMEGA}
    C_{0, \Omega}(\mathbb{R}^n) = \left\{ v \in L^\infty(\mathbb{R}^n): v \in C^0(\Omega), \: v = 0 \text{ in } \mathbb{R}^n \setminus \Omega \right\},
\end{equation}
the space of functions vanishing outside of $\Omega$, which becomes a Banach space when equipped with the norm $\|v\|= \|v\|_{C^0(\Omega)} + \|v\|_{L^\infty(\mathbb{R}^n\setminus \Omega)}$. We study the linear part in the vegetation equation \eqref{eq:u}.  

\begin{lem} 
Let $\mathcal{L}:  C_{0, \Omega}(\mathbb{R}^n) \to C_{0, \Omega}(\mathbb{R}^n)$ be the non-local dispersal operator from equation~\eqref{eq:DispersalOperatorDefinition_1}, $B>0$.
Consider the linear operator $\mathcal{A}: C_{0, \Omega}(\mathbb{R}^n) \to C_{0, \Omega}(\mathbb{R}^n)$ given as
\begin{align}
    \label{EQ:A_OPERATOR_DEFINITION}
    \mathcal{A} v (x) =
    \begin{cases}
        d_v\mathcal{L}v - Bv &x \in \Omega, \\
        0 &x \in \mathbb{R}^n \setminus \Omega,
    \end{cases} \quad d_v >0 \,.
\end{align} 
We denote with $\mathcal{T}(t) : C_{0, \Omega} (\mathbb{R}^n) \to  C_{0, \Omega} (\mathbb{R}^n)$, $t \geq 0$, the solution operator of the linear problem 
    \begin{equation}
        \label{eq:v_semilinear_problem}
        \frac{d}{dt} v(t) = \mathcal{A} v (t) \,, \quad t > 0 \,, \quad  v(0) = v_0 \in C_{0, \Omega}.
    \end{equation}
Then $\mathcal{T}(t) = e^{t\mathcal{A}}$ is well defined and the family $\{\mathcal{T}(t)\}_{t \geq 0}$ is a uniformly continuous semigroup of contractions.
    \label{lem:A_SEMIGROUP_GENERATOR}
\end{lem}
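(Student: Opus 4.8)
The plan is to notice that $\mathcal{A}$ is in fact a \emph{bounded} linear operator on the Banach space $C_{0,\Omega}(\mathbb{R}^n)$, so that the semigroup is produced directly by the exponential series, and then to read off the contraction bound from an elementary factorization.

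First I would isolate the ``gain'' part of the dispersal operator. Setting $\mathcal{K}v(x) := \mathds{1}_\Omega(x)\int_\Omega J(x-y)v(y)\,dy$, the reformulation \eqref{EQ:DispersalOperatorDefinition_2} gives $\mathcal{L} = \mathcal{K} - \mathds{1}_\Omega\,I$, and since every $v \in C_{0,\Omega}(\mathbb{R}^n)$ already vanishes off $\Omega$ we may write $\mathcal{A} = d_v\mathcal{K} - (d_v+B)\,I$ on this space. The key sub-step is to check that $\mathcal{K}$ is a bounded operator $C_{0,\Omega}(\mathbb{R}^n)\to C_{0,\Omega}(\mathbb{R}^n)$ with $\|\mathcal{K}\| \le 1$: the bound $|\mathcal{K}v(x)| \le \|v\|_\infty \int_{\mathbb{R}^n} J = \|v\|_\infty$ is immediate from \ref{J1} and \ref{J5}; continuity of $\mathcal{K}v$ on $\Omega$ follows from $J \in C^0(\mathbb{R}^n)$ together with boundedness of $\Omega$, via $|\mathcal{K}v(x_1) - \mathcal{K}v(x_2)| \le \|v\|_\infty \int_\Omega |J(x_1-y) - J(x_2-y)|\,dy \to 0$ as $x_1 \to x_2$ (uniform continuity of $J$ on the compact set $\overline{\Omega - \Omega}$, or dominated convergence); and vanishing of $\mathcal{K}v$ outside $\Omega$ is built in through the factor $\mathds{1}_\Omega$. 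Consequently $\mathcal{A} \in \mathcal{B}(C_{0,\Omega}(\mathbb{R}^n))$ with $\|\mathcal{A}\| \le 2d_v + B$.

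Next, since $\mathcal{A}$ is bounded, I would invoke (or reprove in two lines) the classical fact that a bounded operator generates a uniformly continuous semigroup: define $\mathcal{T}(t) := e^{t\mathcal{A}} = \sum_{k=0}^\infty \tfrac{t^k}{k!}\mathcal{A}^k$, note the series converges absolutely in operator norm uniformly on compact $t$-intervals, so that $\mathcal{T}(0)=I$, $\mathcal{T}(t+s)=\mathcal{T}(t)\mathcal{T}(s)$ (Cauchy product, the exponents commuting), and $\|\mathcal{T}(t)-\mathcal{T}(s)\| \le |t-s|\,\|\mathcal{A}\|\,e^{(|t|\vee|s|)\|\mathcal{A}\|}$, which in particular gives $\|\mathcal{T}(t)-I\|\to 0$ as $t\to 0^+$; term-by-term differentiation shows $t \mapsto \mathcal{T}(t)v_0$ solves \eqref{eq:v_semilinear_problem} and that $\mathcal{A}$ is its generator.

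Finally, the contraction estimate. Because $(d_v+B)\,I$ commutes with $d_v\mathcal{K}$, the semigroup factors as $\mathcal{T}(t) = e^{-(d_v+B)t}\,e^{t d_v \mathcal{K}}$, and estimating the second factor through its series with $\|\mathcal{K}\| \le 1$ gives $\|e^{t d_v \mathcal{K}}\| \le e^{t d_v}$; hence for every $t \ge 0$, $\|\mathcal{T}(t)\| \le e^{-(d_v+B)t}\,e^{t d_v} = e^{-Bt} \le 1$, so $\{\mathcal{T}(t)\}_{t\ge 0}$ is a semigroup of contractions. (Equivalently one could verify directly that $\mathcal{A}$ is dissipative for the supremum norm by evaluating $\mathcal{L}v$ at a point where $|v|$ is maximal, where $\mathcal{L}v \le 0$, but the factorization already yields the sharp rate $e^{-Bt}$.) The only non-formal point in the whole argument is the mapping property $\mathcal{K}: C_{0,\Omega}(\mathbb{R}^n) \to C_{0,\Omega}(\mathbb{R}^n)$ — continuity of the convolution-type integral of a merely bounded function against the continuous kernel $J$ over a bounded domain, and stability of the space under truncation by $\mathds{1}_\Omega$; everything else is routine. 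Note that only \ref{J1}, \ref{J3}, \ref{J5} and the boundedness of $\Omega$ are used here, and neither the symmetry \ref{J2} nor the second-moment bound \ref{J4}.
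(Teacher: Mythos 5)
Your proof is correct, and while it starts from the same observation as the paper --- $\mathcal{A}$ is a bounded operator, so $e^{t\mathcal{A}}$ is defined by the norm-convergent exponential series and gives a uniformly continuous semigroup --- it reaches the contraction property by a genuinely different and more self-contained route. The paper asserts boundedness of $\mathcal{A}$ ``by definition of $\mathcal{L}$'' and then obtains the contraction bound by citing the discreteness and negativity of the spectrum of $\mathcal{L}$ together with a result of Engel--Nagel; you instead verify the mapping property $\mathcal{K}:C_{0,\Omega}(\mathbb{R}^n)\to C_{0,\Omega}(\mathbb{R}^n)$ explicitly (which is the one non-formal point and is worth writing out, exactly as you do, via uniform continuity of $J$ on $\overline{\Omega-\Omega}$ and \ref{J1}, \ref{J5}), and then exploit the decomposition $\mathcal{A}=d_v\mathcal{K}-(d_v+B)I$ from \eqref{EQ:DispersalOperatorDefinition_2} to factor $\mathcal{T}(t)=e^{-(d_v+B)t}e^{td_v\mathcal{K}}$ and read off $\|\mathcal{T}(t)\|\le e^{-Bt}\le 1$. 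What your approach buys is twofold: it avoids importing spectral information about $\mathcal{L}$ from external references, and it yields an explicit, quantitative decay rate $e^{-Bt}$ rather than only the qualitative contraction statement. Indeed, for a non-self-adjoint bounded operator, negativity of the spectrum controls the spectral radius of $\mathcal{T}(t)$ (hence eventual decay) but not by itself the bound $\|\mathcal{T}(t)\|\le 1$ for all $t\ge 0$, so your direct norm estimate is arguably the cleaner justification of the contraction claim. You are also right that only \ref{J1}, \ref{J5} and boundedness of $\Omega$ enter; the symmetry and moment conditions play no role here.
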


\begin{proof} 
    First, observe that by definition of $\mathcal{L}$, the operator $\mathcal{A}$ is bounded and linear. For each $v \in C_{0, \Omega}$ the solution of the problem \eqref{eq:v_semilinear_problem} exists for all $t \geq 0$, it is unique and given explicitly by $u(t) = \mathcal{T}(t)u(0)$, with $\mathcal{T}(t) = e^{t\mathcal{A}} = \sum_{n=1}^\infty \frac{\mathcal{A}^n t^n}{n!}$. Hence, $\mathcal{T}(t)$ is well defined and $\{\mathcal{T}(t)\}_{t \geq 0}$ is a uniformly continuous semigroup, see \cite[Proposition 1.2.2]{cazenave1998introduction} and \cite[Theorem 1.2]{Pazy}. Thanks to  \cite[Proposition 3.2]{ChasseigneChavesRossi2006} and \cite[Theorem 2.1]{Tadej2025}, the spectrum of the operator $\mathcal{L}$ is discrete and negative. Thus, for each $t \geq 0$, $\mathcal{T}(t):  C_{0, \Omega}(\mathbb{R}^n) \rightarrow C_{0, \Omega}(\mathbb{R}^n)$ is a contraction \cite[Proposition V.1.7]{EngelNagel2000}.    
\end{proof}

Next we study the water equation \eqref{eq:v}. Denote by $C_0(\Omega)$ the Banach space of functions continuous in $\overline{\Omega}$ and vanishing on $\partial\Omega$.

\begin{lem}
    \label{lem:B_SEMIGROUP_GENERATOR}
    Consider the linear operator $\mathcal{B}: D(\mathcal{B}) \to C_0(\Omega)$ given by 
    \begin{align}
        \label{EQ:B_OPERATOR_DEFINITION}
        \mathcal{B} w (x) = 
        \begin{cases}
            d_w \Delta w - w &x \in \Omega, \\
            0 &x \in \partial \Omega, 
        \end{cases} \quad d_w > 0 \,, 
    \end{align}
    where the domain $D(\mathcal{B})$ is given by
    \begin{equation}
        \label{EQ:B_DOMAIN_DEFINITION}
        D(\mathcal{B}) = \left\{ w \in C^2_0(\Omega): \Delta w \in C_0(\Omega) \right\}\,.
    \end{equation}
    Then the solution operator $\mathcal{T}(t): D(\mathcal{B}) \to C_0(\Omega)$ given by $\mathcal{T}(t) = e^{t\mathcal{B}}$ is well defined and the family $\{\mathcal{T}(t)\}_{t \geq 0}$ is a strongly continuous semigroup of contractions.
\end{lem}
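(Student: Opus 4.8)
The plan is to verify the hypotheses of the Lumer–Phillips theorem for $\mathcal{B}$ on the Banach space $C_0(\Omega)$, exploiting the decomposition $\mathcal{B} = d_w\Delta_D - I$, where $\Delta_D$ denotes the realization of the Laplacian on $C_0(\Omega)$ with the domain \eqref{EQ:B_DOMAIN_DEFINITION}. Since $-I$ is bounded, the bounded-perturbation theorem reduces the generation statement to the case of $\Delta_D$ (equivalently $d_w\Delta_D$, after a trivial rescaling of time), and the contraction property of $\{e^{t\mathcal{B}}\}_{t\ge 0}$ is then read off from the product formula $e^{t\mathcal{B}} = e^{-t}\,e^{t d_w\Delta_D}$, a product of two contractions. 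One should note in passing that $C_0(\Omega)$ (rather than $C(\overline\Omega)$) is the correct ambient space here: on $C(\overline\Omega)$ the Dirichlet heat semigroup fails to be strongly continuous at $t=0$.

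The easy hypotheses come first. Density of $D(\mathcal{B})$ in $C_0(\Omega)$ is immediate because $C^\infty_c(\Omega)\subset D(\mathcal{B})$ and $C^\infty_c(\Omega)$ is dense in $C_0(\Omega)$. For dissipativity with respect to the sup-norm, fix $w\in D(\mathcal{B})$ with $w\not\equiv 0$ and pick $x_0\in\overline\Omega$ with $|w(x_0)| = \|w\|_\infty$; since $w$ vanishes on $\partial\Omega$ and $\|w\|_\infty>0$, we must have $x_0\in\Omega$. As $w\in C^2$ near $x_0$ and attains there an interior extremum of its absolute value, the Hessian is semidefinite with the appropriate sign, so $\mathrm{sgn}(w(x_0))\,\Delta w(x_0)\le 0$. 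Pairing $w$ against the normalized tangent functional $\mathrm{sgn}(w(x_0))\,\delta_{x_0}$ yields $d_w\,\mathrm{sgn}(w(x_0))\Delta w(x_0) - |w(x_0)| \le -\|w\|_\infty < 0$, which is (more than) the required dissipativity inequality for $\mathcal{B}$.

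The main obstacle is the range condition: for some $\lambda>0$ the operator $\lambda - \mathcal{B}$ must be surjective onto $C_0(\Omega)$. Concretely, given $f\in C_0(\Omega)$ one must solve $(\lambda+1)w - d_w\Delta w = f$ in $\Omega$ with $w=0$ on $\partial\Omega$, and — crucially — verify that the (unique, by the maximum principle) solution $w$ actually lies in $D(\mathcal{B})$, i.e.\ that $\Delta w = \big((\lambda+1)w - f\big)/d_w$ extends continuously to $\overline\Omega$ with vanishing boundary trace. Here the boundary regularity Assumption \ref{O2} is essential. I would proceed by approximation: take $f_m\in C^\infty_c(\Omega)$ with $f_m\to f$ uniformly, solve classically (Schauder theory on a $C^k$ domain) to obtain $w_m\in C^\infty(\overline\Omega)$, and pass to the limit using the global maximum-principle estimate $\|w_m - w_\ell\|_\infty \le (\lambda+1)^{-1}\|f_m - f_\ell\|_\infty$ for uniform convergence $w_m\to w$ up to $\partial\Omega$, together with interior Schauder estimates to control $\Delta w_m$ on compact subsets; the continuity of $\Delta w$ up to the boundary and its vanishing there are then recovered from the equation and from $f\in C_0(\Omega)$. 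Alternatively, and more economically, one simply invokes the standard fact that the Dirichlet Laplacian generates a (analytic, hence strongly continuous) contraction semigroup on $C_0(\Omega)$ for sufficiently regular $\partial\Omega$, which already packages exactly this elliptic analysis; the claimed statement for $\mathcal{B}$ then follows from the bounded-perturbation decomposition above. I expect the genuinely delicate point to be precisely the continuity of $\Delta w$ up to $\partial\Omega$ — this is what makes \eqref{EQ:B_DOMAIN_DEFINITION} the correct maximal domain — while density, dissipativity, and uniqueness are routine.
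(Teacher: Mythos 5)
Your proposal is correct in substance, but it takes a genuinely different route from the paper: the paper gives no argument at all, deferring entirely to \cite[Theorem 6.1.8]{Arendt2001}. In other words, the paper invokes exactly the ``more economical'' alternative you mention at the end --- the standard fact that the Dirichlet Laplacian generates a strongly continuous contraction semigroup on $C_0(\Omega)$ for sufficiently regular $\partial\Omega$ --- together with the implicit observations that rescaling by $d_w$ and subtracting the bounded operator $\mathrm{Id}$ (which only improves the contraction bound, via $e^{t\mathcal{B}}=e^{-t}e^{td_w\Delta_D}$) do not disturb the conclusion. Your self-contained Lumer--Phillips argument --- density via $C_c^\infty(\Omega)$, sup-norm dissipativity through the interior-maximum/tangent-functional computation, and the range condition via approximation plus the maximum-principle estimate --- is essentially the proof that the citation packages, so it buys transparency (and makes explicit where Assumption \ref{O2} enters) at the cost of length. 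Your remark that $C_0(\Omega)$ rather than $C(\overline{\Omega})$ is the right ambient space is precisely the point that makes the cited generation theorem applicable.

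One step deserves more care than you give it, and it is the step you yourself flag as delicate. For $f\in C_0(\Omega)$ that is merely continuous, the solution of $(\lambda+1)w-d_w\Delta w=f$ need not be classically $C^2$: interior Schauder estimates require H\"older-continuous data, and $\|f_m\|_{C^{0,\alpha}}$ is not controlled by $\|f_m\|_\infty$, so your approximation argument delivers only $w\in W^{2,p}_{loc}(\Omega)$ with $\Delta w=\bigl((\lambda+1)w-f\bigr)/d_w$ in the distributional sense (this function does extend continuously to $\overline{\Omega}$ and vanishes on $\partial\Omega$, as you say). Consequently the range condition is verified for the domain $\{w\in C_0(\Omega):\Delta w\in C_0(\Omega)\ \text{distributionally}\}$, which is how the cited reference defines $D(\mathcal{B})$, but not obviously for the literal domain \eqref{EQ:B_DOMAIN_DEFINITION} with its classical $C^2_0(\Omega)$ requirement. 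This is a defect of the paper's formulation as much as of your proof, and it disappears once the Laplacian in \eqref{EQ:B_DOMAIN_DEFINITION} is read distributionally; you should state that reading explicitly rather than leaving it implicit in the approximation step.
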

\begin{proof}
    The proof can be found in  \cite[Theorem 6.1.8]{Arendt2001}.
\end{proof}
We use the ingredients above to construct the local-in-time solutions of the Klausmeier system.
\begin{thm}
    \label{thm:Well-Posedness}
    Let $(v_0, w_0) \in C_{0, \Omega}(\mathbb{R}^n) \times C_0(\Omega)$. Then there exists $0 < T := T(v_0, w_0)$ and a unique solution $(v, w)$ to the problem \eqref{eq:u}--\eqref{eq:v_init}, such that
    \begin{equation}
        v \in C\left([0, T], C_{0, \Omega}(\mathbb{R}^n)\right) \cap C^1\left([0, T), C_{0, \Omega}(\mathbb{R}^n)\right),
    \end{equation}
    and
    \begin{equation}
        w \in C\left([0, T], C_0(\Omega)\right) \cap C^1\left([0, T), C_0(\Omega)\right).
    \end{equation}
\end{thm}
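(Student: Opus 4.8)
The plan is to recast \eqref{eq:u}--\eqref{eq:v_init} as a semilinear abstract Cauchy problem on the product Banach space $X := C_{0,\Omega}(\mathbb{R}^n) \times C_0(\Omega)$ and then apply the standard local‑existence theory for such equations. Define the linear operator $\mathcal{C}$ on $X$ by $\mathcal{C}(v,w) := (\mathcal{A}v, \mathcal{B}w)$ with domain $D(\mathcal{C}) := C_{0,\Omega}(\mathbb{R}^n) \times D(\mathcal{B})$. By Lemma \ref{lem:A_SEMIGROUP_GENERATOR}, Lemma \ref{lem:B_SEMIGROUP_GENERATOR}, and the elementary fact that the direct sum of two strongly continuous contraction semigroups is again a strongly continuous contraction semigroup whose generator is the direct sum of the two generators, $\mathcal{C}$ generates a strongly continuous semigroup of contractions $\{e^{t\mathcal{C}}\}_{t\ge 0}$ on $X$, acting as $e^{t\mathcal{C}}(v,w) = (e^{t\mathcal{A}}v, e^{t\mathcal{B}}w)$.

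The only term that does not fit this abstract form directly is the constant rainfall source $A$: since $A \notin C_0(\Omega)$, the map $(v,w) \mapsto -v^2 w + A$ does not take values in $C_0(\Omega)$. To remove it, let $\psi$ be the unique solution of the stationary elliptic problem $d_w\Delta\psi - \psi = -A$ in $\Omega$ with $\psi = 0$ on $\partial\Omega$; it exists because $\mathcal{B}$ is boundedly invertible (its spectrum lies in $(-\infty,-1)$), and by the boundary regularity \ref{O2} and elliptic regularity $\psi \in D(\mathcal{B})$. Setting $z := w - \psi$ and $U := (v,z)$, a pair $(v,w)$ solves \eqref{eq:u}--\eqref{eq:v_init} on $[0,T]$ if and only if $U$ solves
\begin{equation*}
    \frac{d}{dt}U(t) = \mathcal{C}U(t) + F(U(t)), \qquad U(0) = \big(v_0,\, w_0 - \psi\big), \qquad F(v,z) := \big(\, v^2(z+\psi),\; -v^2(z+\psi)\,\big).
\end{equation*}
One checks directly that $F$ maps $X$ into $X$: for $v \in C_{0,\Omega}(\mathbb{R}^n)$ and $z \in C_0(\Omega)$, the function $v^2(z+\psi)$ is continuous on $\overline\Omega$, vanishes in $\mathbb{R}^n\setminus\Omega$ because $v$ does, and vanishes on $\partial\Omega$ because $z+\psi$ does. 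Moreover $F$ is polynomial in $(v,z)$ with the fixed bounded coefficient $\psi$, so the standard ``add and subtract'' estimate makes it Lipschitz on bounded subsets of $X$, the Lipschitz constant on $\{\|U\|_X \le R\}$ depending only on $R$ and $\|\psi\|_\infty$.

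Since $\mathcal{C}$ generates a $C_0$-semigroup and $F$ is Lipschitz on bounded sets (indeed smooth), the classical local well‑posedness theorem for semilinear evolution equations (for instance \cite[Theorem 6.1.4]{Pazy}, or \cite{cazenave1998introduction}) provides a time $T := T(\|U(0)\|_X) > 0$ and a unique mild solution $U \in C([0,T], X)$, i.e.\ a solution of the variation‑of‑constants equation
\begin{equation*}
    U(t) = e^{t\mathcal{C}}U(0) + \int_0^t e^{(t-s)\mathcal{C}} F(U(s))\,ds,
\end{equation*}
obtained via Banach's fixed point theorem on a closed ball of $C([0,T], X)$ after shrinking $T$ if needed. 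Undoing the substitution, $(v,w) := (v, z+\psi)$ is the unique mild solution of the original problem.

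It remains to upgrade this mild solution to a classical one with the asserted regularity. For the $v$-component this is automatic: $\mathcal{A}$ is a bounded operator, so $t \mapsto e^{t\mathcal{A}}$ is differentiable in operator norm, the Duhamel integral inherits this differentiability, and hence $v \in C([0,T], C_{0,\Omega}(\mathbb{R}^n)) \cap C^1([0,T), C_{0,\Omega}(\mathbb{R}^n))$ with $v_t = d_v \mathcal{L}v - Bv + v^2 w$ holding in $C_{0,\Omega}(\mathbb{R}^n)$ for each $t$. For the $w$-component one uses that $\mathcal{B}$, the Dirichlet Laplacian shifted by $-I$, generates an \emph{analytic} semigroup on $C_0(\Omega)$ (see \cite{Arendt2001}); since $t \mapsto F(U(t))$ is continuous and, by the parabolic smoothing of $e^{t\mathcal{B}}$ together with a bootstrap, locally Hölder on $(0,T)$, the standard regularity theory for analytic semigroups yields $z(t) \in D(\mathcal{B})$ for $t > 0$ and $z$ a classical solution there; hence, using $\psi \in D(\mathcal{B})$, $w = z + \psi$ satisfies \eqref{eq:v} classically while remaining continuous up to $t = 0$, which gives the regularity stated for $w$. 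I expect the main obstacle to be organizational rather than deep: reconciling the two distinct functional settings inside the product space and, in particular, taming the time‑independent source $A$ (via the stationary shift above) so that the nonlinearity genuinely maps $X$ into $X$ and is Lipschitz there. Once the system is in abstract semilinear form, both the fixed‑point construction and the bootstrap to classical solutions are textbook.
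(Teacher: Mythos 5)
Your proposal follows the same overall route as the paper: recast the system as a semilinear abstract Cauchy problem on the product space $C_{0,\Omega}(\mathbb{R}^n)\times C_0(\Omega)$, use Lemmas \ref{lem:A_SEMIGROUP_GENERATOR} and \ref{lem:B_SEMIGROUP_GENERATOR} to get the diagonal semigroup, and run a Banach fixed point argument on the Duhamel formula. However, you are more careful than the paper in two places, and both refinements address genuine gaps in the published argument rather than being mere polish. First, you correctly observe that the constant rainfall $A$ does not vanish on $\partial\Omega$, so the nonlinearity $(v,w)\mapsto(v^2w,\,-v^2w+A)$ as written in the paper does not map into $C_0(\Omega)$; your stationary shift $z=w-\psi$ with $d_w\Delta\psi-\psi=-A$, $\psi|_{\partial\Omega}=0$, is exactly the standard device needed to make the abstract framework applicable, and the paper simply elides this. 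Second, the paper's upgrade from mild to classical regularity ("integrate the equation, continuity implies differentiability") is not rigorous for the $w$-component, since $\mathcal{B}$ is unbounded and a mild solution need not take values in $D(\mathcal{B})$; your route through the analyticity of the Dirichlet heat semigroup on $C_0(\Omega)$, H\"older continuity in time of $t\mapsto F(U(t))$ via parabolic smoothing, and the standard regularity theorem for analytic semigroups is the correct way to obtain $w\in C^1([0,T),C_0(\Omega))$ with \eqref{eq:v} holding classically. (For the $v$-component your observation that $\mathcal{A}$ is bounded, so differentiability is automatic, matches what the paper implicitly relies on.) In short: same architecture, but your version closes two holes that the paper's own proof leaves open.
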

\begin{proof}
    Given $\mathcal{A}, \mathcal{B}$ by the formulas \eqref{EQ:A_OPERATOR_DEFINITION} and \eqref{EQ:B_OPERATOR_DEFINITION}--\eqref{EQ:B_DOMAIN_DEFINITION} respectively, let us define the operator $\mathcal{C}: C_{0, \Omega}(\mathbb{R}^n) \times D(\mathcal{B}) \to C_{0, \Omega}(\mathbb{R}^n) \times C_0(\Omega)$ as follows
    \begin{equation}
        \mathcal{C} = 
        \begin{bmatrix}
            \mathcal{A} & 0\\ 0 & \mathcal{B}
        \end{bmatrix}.
    \end{equation}
    Next, denote the vectors $u = (v, w)$ and $u_0 = (v_0, w_0)$. Using this notation we rewrite the Klausmeier system \eqref{eq:u}--\eqref{eq:v_init} as
    \begin{equation}
        \label{eq:ClassicalSemigroupFormulation}
         u'(t) = \mathcal{C} u(t) + F(u(t)), \quad u(0) = u_0,
    \end{equation}
    where  $F: C_{0, \Omega}(\mathbb{R}^n) \times C_0(\Omega) \to C_{0, \Omega}(\mathbb{R}^n) \times C_0(\Omega)$ is given by
    \begin{align}
        F(u) = F(v, w) = 
        \begin{cases}
            (v^2 w, -v^2 w + A) &x \in \Omega, \\
            (0, 0) &x \notin \Omega
        \end{cases}.
    \end{align}
    Lemma \ref{lem:A_SEMIGROUP_GENERATOR} and Lemma \ref{lem:B_SEMIGROUP_GENERATOR} yield that $\mathcal{C}$ generates a semigroup $\{\mathcal{T}(t)\}_{t \geq 0}$ of contractions on $C_{0, \Omega}(\mathbb{R}^n) \times C_0(\Omega)$. Thus, we construct the solution $u$ using the Duhamel formula \cite[Lemma 4.1.1]{cazenave1998introduction}
    \begin{equation}
        \label{eq:MildSemigroupFormulation}
         u(t) = \mathcal{T}(t) u_0 + \int_0^t \mathcal{T}(t-s) F(u(s)) \:ds.
    \end{equation}
    Since $u_0 \in C_{0, \Omega}(\mathbb{R}^n) \times C_0(\Omega)$ and $F$ is a locally Lipschitz function, we proceed with a standard Banach fixed point argument \cite[Proposition 4.3.3]{cazenave1998introduction}, ensuring that there exists a positive number $T := T(u_0)$ such that \eqref{eq:MildSemigroupFormulation} admits a unique solution $u \in C\left([0, T], C_{0, \Omega}(\mathbb{R}^n) \times C_0(\Omega)\right)$. To show differentiability in time one needs to integrate equations \eqref{eq:u}, \eqref{eq:v} to find that $u$ satisfies
    \begin{equation}
        \label{eq:IntegralFormula}
        u(t) = u_0 + \int_0^t (\mathcal{C}u(s) + F(u(s))) \:ds.
    \end{equation}
    Hence, the continuity of $u$ implies differentiability of $u$ in the time interval $[0, T)$.
\end{proof}
Given that solutions exist we continue by studying their properties. 

\subsection{Non-negative and bounded solutions} 
We start by showing the non-negativity and the existence of an invariant region $\Gamma$ for the system \eqref{eq:u}--\eqref{eq:v_init}.
\begin{thm}
    \label{thm:Non-negativity}
     Let the initial datum $(v_0, w_0) \in C_{0, \Omega}(\mathbb{R}^n) \times C_0(\overline{\Omega})$ be non-negative. Then the solution $(v, w)$ ensured by Theorem \ref{thm:Well-Posedness} is non-negative and for all $(x, t) \in \Omega \times [0, T]$ it holds
     \begin{equation}
         w(x,t) \leq \max\{\|w_0\|_\infty, A\} =: R_1 >0\,.
     \end{equation}
     Moreover, let $\Gamma = [0, B/R_1]$. If  $v_0(x) \in \Gamma$ for all $x \in \Omega$ then for all $(x, t) \in \Omega \times [0, T]$ the solution $v(x,t) \in \Gamma$.
\end{thm}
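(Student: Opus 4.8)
The plan is to prove non-negativity and the invariant bounds via a comparison/invariant-region argument, treating the two components in the natural order: first $w$ (which decouples on the upper side), then $v$.

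First I would establish $w \geq 0$. Observe that in the region where $w$ is small the reaction term $-v^2 w - w + A$ is bounded below by $A > 0$ (since $v^2 w \geq 0$ whenever $w \geq 0$, and we only care about the behavior near $w = 0$). More precisely, I would argue that $0$ is a subsolution of the $w$-equation: if $w_0 \geq 0$, then at any first time and point where $w$ would touch $0$ from above, the parabolic term $d_w \Delta w \geq 0$ and the reaction term equals $A > 0$, giving $w_t > 0$ there, a contradiction. This can be made rigorous either through the strong maximum principle for the semigroup $e^{t\mathcal{B}}$ (which is positivity-preserving, being generated by $d_w\Delta - 1$ with Dirichlet data) combined with the Duhamel representation and a bootstrap/fixed-point iteration that preserves positivity, or via a standard $w^- := \min\{w,0\}$ energy estimate. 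Simultaneously, the upper bound $w \leq R_1 := \max\{\|w_0\|_\infty, A\}$ follows because the constant $R_1$ is a supersolution: plugging $w \equiv R_1$ into the reaction gives $-v^2 R_1 - R_1 + A \leq -R_1 + A \leq 0$ by the choice of $R_1$, so $R_1$ cannot be exceeded; again this is the maximum principle for $\mathcal{B}$ plus the sign of the nonlinearity.

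Next I would handle $v$. For non-negativity, the key point is that the non-local operator $\mathcal{L}$ is positivity-preserving in the following sense: if $v \geq 0$ and $v(x_0) = 0$, then $\mathcal{L}v(x_0) = \mathds{1}_\Omega(x_0)\int_\Omega J(x_0 - y) v(y)\,dy \geq 0$ (using the rewritten form \eqref{EQ:DispersalOperatorDefinition_2}). Hence at a hypothetical first touching of $v$ with $0$, we get $v_t = d_v \mathcal{L}v + v^2 w - Bv \geq 0 + 0 - 0 = 0$, preventing $v$ from going negative. More structurally, since $\mathcal{A} = d_v\mathcal{L} - B$ generates the contraction semigroup $\mathcal{T}(t) = e^{t\mathcal{A}}$ from Lemma~\ref{lem:A_SEMIGROUP_GENERATOR}, and $e^{t\mathcal{A}}$ is positivity-preserving (write $\mathcal{A} = (d_v \mathcal{L}_+ ) - (d_v + B)\mathrm{Id}$ where $\mathcal{L}_+ v(x) = \mathds{1}_\Omega(x)\int_\Omega J(x-y)v(y)\,dy$ is a positive operator, so $e^{t\mathcal{A}} = e^{-(d_v+B)t} e^{t d_v \mathcal{L}_+}$ is a positive operator), the Duhamel formula \eqref{eq:MildSemigroupFormulation} together with $w \geq 0$ (hence $v^2 w \geq 0$) and a monotone iteration shows $v \geq 0$.

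Finally, for the invariant interval $\Gamma = [0, B/R_1]$: the upper endpoint $\kappa := B/R_1$ is a supersolution of the $v$-equation on $\Gamma$ in the relevant regime. Indeed, if $0 \leq v \leq \kappa$ and $0 \leq w \leq R_1$, then at a point where $v = \kappa$ we have $\mathcal{L}v(x) = \mathds{1}_\Omega(x)(\int_\Omega J(x-y)v(y)\,dy - \kappa) \leq \mathds{1}_\Omega(x)(\kappa \int_\Omega J(x-y)\,dy - \kappa) \leq 0$ since $\int_\Omega J \leq \int_{\mathbb{R}^n} J = 1$ by \ref{J5}; and the reaction term is $v^2 w - Bv = \kappa^2 w - B\kappa = \kappa(\kappa w - B) \leq \kappa(\kappa R_1 - B) = 0$ by the definition of $\kappa$. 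Hence $v_t \leq 0$ wherever $v$ attains the value $\kappa$, so $v$ cannot exceed $\kappa$. Combined with $v \geq 0$, this gives $v(x,t) \in \Gamma$ for all $t \in [0,T]$.

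The main obstacle I anticipate is making the "first touching time" maximum-principle arguments fully rigorous in this mild-solution/non-local setting, where classical pointwise parabolic comparison does not directly apply to the $v$-equation (the operator $\mathcal{L}$ is non-local, and $v$ is only continuous, not $C^2$). The clean route is to phrase everything through the positivity-preserving property of the semigroups $e^{t\mathcal{A}}$ and $e^{t\mathcal{B}}$ and run a monotone iteration on the Duhamel fixed-point scheme (the same contraction used in Theorem~\ref{thm:Well-Posedness}), so that positivity and the barriers $R_1$, $B/R_1$ are preserved at each iterate and hence by the limit; one should also note that $w \geq 0$ must be established before the $v$-analysis, since the sign of $v^2 w$ is what drives the lower barrier for $v$. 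A secondary technical point is the mild interplay at $\partial\Omega$ for $v$, but the boundary condition \eqref{eq:u_bound} forces $v = 0$ there, consistent with $\Gamma \ni 0$, so no conflict arises.
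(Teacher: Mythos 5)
Your proposal is correct: the barriers you use ($0$ and $R_1$ for $w$, $0$ and $B/R_1$ for $v$), the sign computations for the reaction terms, and the observation that $\mathcal{L}v\geq 0$ at a zero of a non-negative $v$ and $\mathcal{L}v\leq 0$ at a global maximum (via \ref{J5}) are exactly the ingredients of the paper's proof. The only substantive difference is how the pointwise comparison argument is made rigorous for the non-local $v$-equation, which you correctly flag as the main obstacle. The paper resolves it by working with the extremal-value functions $\mu(t)=\min_{x}v(x,t)$ and $\nu(t)=\max_{x}v(x,t)$: it shows they are Lipschitz, invokes Rademacher's theorem for a.e.\ differentiability, controls the difference quotients from both sides with the mean value theorem, and thereby obtains the scalar differential inequalities $\mu'\geq -B\mu$ and $\nu'\leq R_1\nu^2-B\nu$, which are then integrated explicitly (for $w$ it simply cites the parabolic maximum principle). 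You instead propose positivity of the semigroup via the splitting $\mathcal{A}=d_v\mathcal{L}_+-(d_v+B)\mathrm{Id}$ together with a monotone Picard iteration on the Duhamel formula; this is a valid and arguably cleaner route for non-negativity of $v$ (and your factorization $e^{t\mathcal{A}}=e^{-(d_v+B)t}e^{td_v\mathcal{L}_+}$ is sound), though note that the upper barrier $B/R_1$ is not directly delivered by semigroup positivity alone and would still require a touching-time or extremal-value argument of the kind the paper formalizes. The paper's route has the added benefit of producing the explicit decay bound $\nu(t)\leq B\nu(0)/\bigl(R_1\nu(0)+(B-R_1\nu(0))e^{Bt}\bigr)$, which is reused later in the extinction theorem, whereas your version only yields invariance of $\Gamma$.
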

\begin{proof}
    It follows from the parabolic maximum principle that for all $(x, t) \in \Omega \times [0, T)$ we have the estimate of the form
    \begin{equation}
        \label{EQ:AprioriEstimate_V}
        0 \leq w(x, t) \leq \max\{\|w_0\|_\infty, A\} =: R_1.
    \end{equation}
    Next, we define the point $\xi := \xi(t) \in \Omega$ and evaluation $\mu := \mu(t)$ such that 
    \begin{equation*}
        \mu(t) := v(\xi(t), t) = \min_{x \in \Omega} v(x, t).
    \end{equation*}
    Without loss of generality, we take arbitrary $0 < s < t < T$ and use the 
    Lipschitz continuity of $\mu$ to estimate
    \begin{equation*}
        \left|\mu(t) - \mu(s)\right| \leq \left|v(\xi(s), t) - v(\xi(s), s)\right| \leq \max_{(x, r) \in \Omega \times [0, T]} |v(x, r)|\, |t-s|.
    \end{equation*}
    It follows from the Radamacher Theorem \cite[Theorem 3.2]{EvansGariepy2015} that $\mu$ is differentiable almost everywhere in $[0, T]$. Next, we estimate the difference quotients by applying the mean value theorem
    \begin{align*}
        \frac{\mu(t+h) - \mu(t)}{h} &\leq \frac{v(\xi(t+h), t+h) - v(\xi(t + h), t)}{h} = v_t(\xi(t + h), c(t, h)), \\
        \frac{\mu(t+h) - \mu(t)}{h} &\geq \frac{v(\xi(t), t+h) - v(\xi(t), t)}{h} = v_t(\xi(t), c(t, h)),
    \end{align*}
    where $c(t,h) \in [t, t+h]$ is an intermediate point. Passing with $h \rightarrow 0$, estimating $\mathcal{L}$ in a minimum point and using non-negativity of $w$, we obtain the differential inequality
    \begin{align}
        \label{eq:differential_inequality_min}
        \begin{split}
            \mu'(t) &= d_v \mathcal{L} v(\xi(t), t) + \mu^2(t) w(\xi(t), t) - B \mu(t) \geq - B \mu(t), \\
            \mu(0) &= \min_{x \in \Omega} v_0(x) \geq 0.
        \end{split}
    \end{align}
    When solved, inequality \eqref{eq:differential_inequality_min} yields the following lower bound 
    \begin{equation*}
        v(x,t) \geq \mu(t) \geq \mu(0) e^{-B t} \geq 0.
    \end{equation*}
    Now, we define the point $\zeta := \zeta(t) \in \Omega$ and evaluation $\nu := \nu(t)$ such that
    \begin{equation*}
        \nu(t) := v(\zeta(t), t) = \max_{x \in \Omega} v(x,t).
    \end{equation*}
    Proceeding as previously, we arrive at the following differential inequality
    \begin{align}
        \label{eq:differential_inequality_max}
        \begin{split}
            \nu'(t) &= d_v\mathcal{L} v(\zeta(t), t) + \nu^2(t) w(\zeta(t), t) - B \nu(t) \leq R_1 \nu^2(t) - B \nu(t), \\
            \nu(0) &= \max_{x \in \Omega} v_0(x) \leq B/R_1.
        \end{split}
    \end{align}
    When solved, inequality \eqref{eq:differential_inequality_max} yields the following upper bound
    \begin{equation}
        \label{EQ:AprioriEsitmate_Unvariant_Region}
        v(x,t) \leq \nu(t) \leq \frac{B\nu(0)}{R_1 \nu(0) + 
        (B - R_1 \nu(0)) e^{B t}} \leq B / R_1.
    \end{equation}
    Hence, $\Gamma = [0, B/R_1]$ is an invariant region.
\end{proof}
While so far we have focused on local-in-time solutions, we next study the long-time behavior of the Klausmeier system. In particular, we use the results above for the stability analysis of stationary solutions.  

\section{Stationary solutions}
We seek stationary solutions of the Klausmeier system \eqref{eq:u}--\eqref{eq:v_init}, i.e.\ solutions of the system  
\begin{numcases}{}
    0 = d_v \mathcal{L} v + v^2 w - B v & \hspace{2mm} $x \in \Omega,$ \label{eq:u_stationary} \\
    0 = d_w\Delta w - v^2w - w + A & \hspace{2mm} $x \in \Omega,$ \label{eq:v_stationary} \\
    v = 0 & \hspace{2mm} $x \in \mathbb{R}^n \setminus \Omega , $ \label{eq:u_bound_stationary} \\
    w = 0 & \hspace{2mm} $x \in \partial\Omega.$ \label{eq:v_bound_stationary}
\end{numcases}

We start by showing that there exist non-trivial stationary solutions with non-constant water profiles. 

\subsection{Existence of non-constant water profiles}

As a first step, we aim to reduce the number of equations in the Klausmeier system \eqref{eq:u_stationary}--\eqref{eq:v_bound_stationary}. 
 
\begin{lem}
    \label{lem:existence_of_inhomogeneous_elliptic_v}
    Let $v \in C(\Omega)$ be a given function. Then there exists a unique solution $W \in W^{1,2}_0(\Omega) \cap W^{2,2}_{loc}(\Omega)$ of equations \eqref{eq:v_stationary} and \eqref{eq:v_bound_stationary}. Moreover, we have that $\|W \|_\infty \leq A$, where $A >0$ is the average rainfall. 
\end{lem}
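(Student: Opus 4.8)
The plan is to regard \eqref{eq:v_stationary}--\eqref{eq:v_bound_stationary} as a linear elliptic Dirichlet problem with the function $v$ frozen in the coefficients: after rearranging, $W$ must solve
\[
-d_w\Delta W + c\,W = A \quad \text{in }\Omega, \qquad W = 0 \quad \text{on }\partial\Omega,
\]
with $c := 1 + v^2 \geq 1$, a nonnegative measurable coefficient that is bounded whenever $v$ is (which is the case in all our applications, where $v$ takes values in the invariant region $\Gamma$ of Theorem \ref{thm:Non-negativity}). First I would produce a weak solution $W \in W^{1,2}_0(\Omega)$ via the Lax--Milgram theorem applied to the bilinear form
\[
a(W,\varphi) = d_w\int_\Omega \nabla W\cdot\nabla\varphi\,dx + \int_\Omega c\,W\varphi\,dx
\]
on $W^{1,2}_0(\Omega)$: boundedness of $a$ follows from $c\in L^\infty(\Omega)$; coercivity follows from the sign of the zeroth-order term together with the Poincaré inequality (available since $\Omega$ is bounded), namely $a(W,W)\geq d_w\|\nabla W\|_{L^2}^2 \geq c_0\|W\|_{W^{1,2}_0}^2$ for some $c_0>0$; and $\varphi\mapsto A\int_\Omega\varphi$ is a bounded functional on $W^{1,2}_0(\Omega)$ because $A$ is constant and $\Omega$ has finite measure. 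Uniqueness is then immediate: the difference of two solutions lies in the kernel of $a$, on which $a(\cdot,\cdot)$ is positive definite.

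Second, I would upgrade the regularity to $W^{2,2}_{loc}(\Omega)$. Writing the equation as $-\Delta W = g$ with $g := (A - cW)/d_w$, continuity of $v$ on $\Omega$ makes $c$ locally bounded, so $g\in L^2_{loc}(\Omega)$; standard interior $L^2$-regularity for the Laplacian then yields $W\in W^{2,2}_{loc}(\Omega)$, as claimed. No boundary regularity of $\Omega$ is needed for this step.

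Third, for the bound $\|W\|_\infty\leq A$ I would invoke the weak maximum principle in Stampacchia form. Testing the weak formulation with $\varphi = (W-A)^+\in W^{1,2}_0(\Omega)$ (admissible because $W$ vanishes on $\partial\Omega$ and $A\geq 0$) gives
\[
d_w\bigl\|\nabla (W-A)^+\bigr\|_{L^2}^2 + \int_{\{W>A\}}\bigl(cW - A\bigr)(W-A)\,dx = 0,
\]
and on $\{W>A\}$ one has $cW\geq W>A$ (since $c\geq 1$ and $W>A>0$ there), so both terms are nonnegative; hence each vanishes and $(W-A)^+\equiv 0$, i.e.\ $W\leq A$ a.e. Testing instead with $\varphi = W^- = \max\{-W,0\}$ and using $A>0$ gives $W\geq 0$ a.e.\ in the same way. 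Altogether $0\leq W\leq A$, whence $\|W\|_\infty\leq A$.

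The only delicate point I anticipate is purely technical: ensuring the coefficient $c = 1+v^2$ is admissible for the Lax--Milgram step when $v$ is merely continuous on the \emph{open} set $\Omega$ and could in principle be unbounded near $\partial\Omega$. This causes no harm because the lemma is always applied with $v$ bounded (values in $\Gamma$); alternatively one may first solve the problem on an exhaustion of $\Omega$ by compactly contained subdomains with truncated coefficients and pass to the limit, using the uniform a priori bound $0\leq W_k\leq A$ from the third step. The regularity and maximum-principle arguments are otherwise entirely standard.
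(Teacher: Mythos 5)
Your proposal is correct and follows essentially the same route as the paper, which simply cites Theorem 8.9 of Gilbarg--Trudinger for existence/uniqueness (the Lax--Milgram argument you write out) and the elliptic maximum principle for the bound $0\leq W\leq A$ (your Stampacchia truncation test). The only difference is that you supply the details the paper delegates to references, including the interior $W^{2,2}_{loc}$ regularity and the caveat about boundedness of $c=1+v^2$, both handled correctly.
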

\begin{proof}
    The proof of the existence and uniqueness is standard, see for instance \cite[Theorem 8.9]{gilbarg2001elliptic}. The remaining estimate $\|W\|_\infty \leq A$ follows directly from the elliptic maximum principle, we refer to \cite[Chapter 3]{gilbarg2001elliptic}.
\end{proof}

Next we show that the mapping $W = W(v)$ is Lipschitz continuous.
\begin{lem}
    \label{lem:stationary_v_lipschitz_constant}
    Let $B_R \subset C(\Omega)$ be the closed ball of radius $R > 0$ centered at $0$. 
    Consider the map $W : B_R \rightarrow W^{1,2}_0(\Omega)$, with $W=W(v)$ being the solution of the problem \eqref{eq:v_stationary}, \eqref{eq:v_bound_stationary}. Then $W (B_R) \subset L^\infty(\mathbb{R}^n)$ and for arbitrary $v_1, v_2 \in B_R$ there holds
    \begin{equation}
        \|W(v_1) - W(v_2)\|_\infty \leq 2 A R \|v_1 - v_2\|_\infty.
    \end{equation}
\end{lem}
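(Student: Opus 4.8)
The plan is to reduce everything to a comparison estimate for the difference of two copies of the elliptic equation~\eqref{eq:v_stationary}. First I would record the easy inclusion: by Lemma~\ref{lem:existence_of_inhomogeneous_elliptic_v} every $W(v)$ obeys $\|W(v)\|_\infty \le A$ and vanishes outside $\Omega$, so, extended by zero, $W(v) \in L^\infty(\mathbb{R}^n)$; hence $W(B_R) \subset L^\infty(\mathbb{R}^n)$. For the Lipschitz bound, fix $v_1,v_2 \in B_R$, set $W_i := W(v_i)$ and $Z := W_1 - W_2 \in W^{1,2}_0(\Omega) \cap W^{2,2}_{loc}(\Omega)$. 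Subtracting the two equations~\eqref{eq:v_stationary} and using the algebraic identity $v_1^2 W_1 - v_2^2 W_2 = v_1^2 Z + (v_1^2 - v_2^2) W_2$ gives, in the weak (a.e.) sense,
\[
d_w \Delta Z - \bigl(v_1^2 + 1\bigr)\,Z = (v_1^2 - v_2^2)\,W_2 =: g, \qquad Z = 0 \text{ on } \partial\Omega .
\]

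Next I would estimate the source term. Writing $g = (v_1 - v_2)(v_1 + v_2)\,W_2$ and using $\|v_i\|_\infty \le R$ together with $\|W_2\|_\infty \le A$ from Lemma~\ref{lem:existence_of_inhomogeneous_elliptic_v} yields
\[
\|g\|_\infty \le \|v_1 - v_2\|_\infty\,\|v_1 + v_2\|_\infty\,\|W_2\|_\infty \le 2AR\,\|v_1 - v_2\|_\infty .
\]
It then remains to prove $\|Z\|_\infty \le \|g\|_\infty$. Here I would use a constant barrier: put $\kappa := \|g\|_\infty$ and consider $Z - \kappa$. Since the zero-order coefficient $c(x) := v_1^2(x) + 1$ satisfies $c \ge 1$, we get $d_w \Delta(Z - \kappa) - c\,(Z - \kappa) = g + c\,\kappa \ge (c - 1)\kappa \ge 0$, so $Z - \kappa$ is a weak subsolution of the operator $u \mapsto d_w \Delta u - c\,u$, whose zero-order part $-c$ is $\le 0$; as $Z - \kappa = -\kappa \le 0$ on $\partial\Omega$, the weak maximum principle (e.g.\ \cite[Theorem~8.1]{gilbarg2001elliptic}, applicable because $Z \in W^{1,2}_0(\Omega)$ and $g \in L^\infty(\Omega)$) forces $Z \le \kappa$ in $\Omega$. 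Applying the same argument to $-Z$, which solves the same type of equation with source $-g$, gives $-Z \le \kappa$, hence $\|Z\|_\infty \le \kappa = \|g\|_\infty \le 2AR\,\|v_1 - v_2\|_\infty$, which is exactly the assertion.

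The only genuinely delicate point is the regularity bookkeeping required to invoke the maximum principle: since $v_i$ is merely continuous, $W_i$ is a priori only $W^{1,2}_0(\Omega) \cap W^{2,2}_{loc}(\Omega)$, so the comparison must be carried out for weak sub/supersolutions rather than by evaluating at an interior extremum; alternatively one can first upgrade $W_i$ to $C^0(\overline\Omega)$ via the $L^\infty$ bound of Lemma~\ref{lem:existence_of_inhomogeneous_elliptic_v} and De Giorgi--Nash--Moser interior Hölder estimates, and then argue classically at a maximum point. Everything else --- the algebraic splitting that makes the coefficient of $Z$ nonnegative, and the two elementary $L^\infty$ bounds --- is routine.
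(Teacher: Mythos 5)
Your proof is correct, and up to the source-term bound it follows the same reduction as the paper: both write $Z = W(v_1)-W(v_2)$ as the solution of $d_w\Delta Z-(v_1^2+1)Z=(v_1^2-v_2^2)W(v_2)=:g$ with zero boundary data, and both obtain $\|g\|_\infty\le 2AR\|v_1-v_2\|_\infty$ from $\|W(v_2)\|_\infty\le A$ and $\|v_i\|_\infty\le R$. The two arguments diverge at the final step. The paper passes from the bound on $\|g\|_\infty$ to the conclusion via elliptic regularity (\cite[Theorems 8.33 and 8.34]{gilbarg2001elliptic}), asserting $\|Z\|_{C^{1,\alpha}(\Omega)}\le 2AR\|v_1-v_2\|_\infty$; as written this suppresses the domain- and coefficient-dependent constant that those Schauder-type estimates carry, so that route does not by itself deliver the clean constant $2AR$ appearing in the statement. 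Your constant-barrier argument instead exploits that the zero-order coefficient satisfies $v_1^2+1\ge 1$, which is precisely what makes the comparison $\|Z\|_\infty\le\|g\|_\infty$ hold with constant one; it is more elementary, needs no regularity beyond the $W^{1,2}_0$ framework of Lemma \ref{lem:existence_of_inhomogeneous_elliptic_v} (the weak maximum principle \cite[Theorem 8.1]{gilbarg2001elliptic} applies to $W^{1,2}$ sub- and supersolutions since the zero-order part $-(v_1^2+1)$ is nonpositive), and it is the argument that genuinely produces the stated Lipschitz constant. Your closing remark about carrying out the comparison in the weak sense rather than at an interior extremum is exactly the right caveat given that $v_i$ is only continuous.
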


\begin{proof}
    For arbitrary $v_1, v_2 \in B_R \subset L^\infty(\Omega)$ we define the difference $u = W(v_1) - W(v_2)$, satisfying 
    \begin{align}
        \label{eq:Difference_ELLIPTIC_DEF}
        \begin{cases}
            d_w\Delta u - (v_1^2 + 1) W(v_1) + (v_2^2 + 1) W(v_2) = 0, &\quad x \in \Omega, \\
            u = 0, &\quad x \in \partial\Omega.
        \end{cases}
    \end{align}
    Since $W(v_1) = u + W(v_2)$, we can equivalently express problem \eqref{eq:Difference_ELLIPTIC_DEF} as
    \begin{align}
        \label{eq:lipschitz_continuity_formulation_difference}
        \begin{cases}
            d_w\Delta u - (v_1^2 + 1) u  = W(v_2) (v_1^2 - v_2^2), &\quad x \in \Omega, \\
            u = 0, &\quad x \in \partial\Omega.
        \end{cases}
    \end{align}
    We estimate the supremum norm of the right-hand side of the first equation in \eqref{eq:lipschitz_continuity_formulation_difference}
    \begin{equation}
        \label{eq:Linfty_norm_rhs_difference_estimate}
        \|W(v_2) (v_1^2-v_2^2)\|_\infty \leq 2AR\|v_1-v_2\|_\infty.
    \end{equation}
    Note that by Lemma \ref{lem:existence_of_inhomogeneous_elliptic_v} $u \in W^{1,2}(\Omega)$. Thus, by regularity theory \cite[Theorems 8.33 and 8.34]{gilbarg2001elliptic} and the bound \eqref{eq:Linfty_norm_rhs_difference_estimate} we obtain that the solution $u$ of the problem \eqref{eq:lipschitz_continuity_formulation_difference} satisfies
    \begin{equation}
        \|W(v_1) - W(v_2)\|_{C^{1, \alpha}(\Omega)} =: \|u\|_{C^{1, \alpha}(\Omega)}  \leq 2AR\|v_1-v_2\|_\infty,
    \end{equation}
    with $0 < \alpha < 1$. Finally, the definition of the norm $\|\cdot\|_{C^{1,\alpha}(\Omega)}$ implies that 
    \begin{equation}
        \|W(v_1) - W(v_2)\|_\infty = \|u\|_\infty \leq \|u\|_{C^{1, \alpha}(\Omega)}  \leq 2AR\|v_1-v_2\|_\infty.
    \end{equation}
\end{proof}
In light of the fact above, we consider the following system of constant solutions of equations \eqref{eq:u_stationary} and \eqref{eq:v_stationary}
\begin{align}
    \label{EQ:ConstantSteadyStates}
    \begin{cases}
        0 = v^2w - Bv,\\
        0 = -v^2w - w + A.
    \end{cases}
\end{align}
We show that the solution $W(v_*)$, obtained by Lemma \ref{lem:existence_of_inhomogeneous_elliptic_v}, is close to the corresponding solution $(v_*, w_*)$ of the system \eqref{EQ:ConstantSteadyStates}. 

\begin{lem}
    \label{lem:stationary_v_estimate_v_star}
    Let $(v_*, w_*)$ be a real solution of the system \eqref{EQ:ConstantSteadyStates}. Denote by $W(v_*)$ the solution of equations \eqref{eq:v_stationary} and \eqref{eq:v_bound_stationary}. Then for all $\epsilon > 0$ there exists a diffusion rate $d_w > 0$, sufficiently small, such that
    \begin{equation*}
        \| W(v_*) - w_*\|_\infty < \epsilon.
    \end{equation*}
\end{lem}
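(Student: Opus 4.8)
The plan is to reduce the claim to a uniform estimate for the deviation $z := W(v_*) - w_*$ and to control $z$ by the elliptic maximum principle together with a barrier adapted to the small parameter $d_w$. First I would record that, by the second equation of \eqref{EQ:ConstantSteadyStates}, the constant $w_*$ satisfies $(v_*^2+1)\,w_* = A$, so that $w_* = A/(v_*^2+1) \in (0, A]$. Since $w_*$ is constant we have $\Delta w_* = 0$, and subtracting this kinetic identity from \eqref{eq:v_stationary} (evaluated at $v \equiv v_*$, $w = W(v_*)$) shows that $z$ solves the linear, homogeneous, singularly perturbed problem
\begin{equation*}
    -d_w \Delta z + (v_*^2+1)\,z = 0 \quad \text{in } \Omega, \qquad z = -w_* \quad \text{on } \partial\Omega.
\end{equation*}
In this way the statement becomes the assertion that the solution of this Helmholtz-type Dirichlet problem, with constant boundary data $-w_*$, tends to $0$ uniformly on $\Omega$ as $d_w \to 0$.

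Next I would extract the crude two-sided bound that localizes the error. Because the zeroth-order coefficient $c := v_*^2 + 1$ is a positive constant and the boundary datum $-w_*$ is nonpositive, the maximum principle \cite[Chapter 3]{gilbarg2001elliptic} yields $-w_* \le z \le 0$, i.e.\ $0 \le W(v_*) \le w_*$, consistent with Lemma \ref{lem:existence_of_inhomogeneous_elliptic_v}. To upgrade this to genuine smallness I would exploit the separation of scales encoded in the parameter $\mu := \sqrt{c/d_w}$, which blows up as $d_w \to 0$: the solution $z$ relaxes from its boundary value $-w_*$ to the bulk value $0$ across a layer of width $\sim 1/\mu = \sqrt{d_w/c}$, with exponential-in-$\mu$ decay into the interior. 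Concretely I would compare $z$ with explicit exponential barriers of the form $x \mapsto -w_*\,e^{-\mu\,\rho(x)}$, where $\rho$ is a smooth approximation of $\mathrm{dist}(\cdot,\partial\Omega)$, and invoke the elliptic regularity estimates \cite[Theorems 8.33 and 8.34]{gilbarg2001elliptic} to turn the pointwise barrier comparisons into the desired sup-norm control, fixing $d_w$ small in terms of $\epsilon$, $v_*$ and $\Omega$.

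The main obstacle, and the crux of the argument, is to make the smallness uniform up to $\partial\Omega$: the naive maximum-principle bound of the previous step only delivers $\|z\|_\infty \le w_*$, so the entire difficulty is concentrated in the thin boundary layer where $z$ must interpolate between the value $-w_*$ on $\partial\Omega$ and the value $\approx 0$ in the bulk. To handle this I would use the boundary regularity from Assumption \ref{O2} to flatten $\partial\Omega$ locally, reduce to a half-space model in which the normal profile is the explicit one-dimensional solution $1 - e^{-\mu s}$ in the distance variable $s$, and glue the local estimates along a finite covering of $\partial\Omega$. The technical heart is to build the two-sided barriers so that, after absorbing the curvature terms generated by the flattening, the deviation is brought below $\epsilon$ throughout $\Omega$, including within the layer; controlling $z$ uniformly across this boundary layer is the step I expect to be by far the hardest.
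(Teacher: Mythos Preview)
Your reduction to the Dirichlet problem
\[
-d_w\,\Delta z + (v_*^2+1)\,z = 0 \quad\text{in }\Omega,\qquad z=-w_* \quad\text{on }\partial\Omega
\]
for $z=W(v_*)-w_*$ is correct, as is the two-sided bound $-w_*\le z\le 0$. The fatal obstruction lies in the step you yourself flag as ``by far the hardest'': it is in fact impossible. Since $W(v_*)\in C(\overline\Omega)$ with $W(v_*)=0$ on $\partial\Omega$, the deviation $z$ attains the value $-w_*$ on $\partial\Omega$ for \emph{every} $d_w>0$, whence $\|z\|_{L^\infty(\Omega)}=w_*$ identically and cannot be pushed below any $\epsilon<w_*$. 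Your barrier $-w_*\,e^{-\mu\rho(x)}$ itself equals $-w_*$ at $\rho=0$; no choice of $\mu$ alters this. The boundary-layer machinery you outline would legitimately yield uniform smallness on compact subsets of $\Omega$, or smallness in $L^p(\Omega)$ for any finite $p$ (the layer has volume $O(\mu^{-1})$), but not the $L^\infty$ bound over all of $\Omega$ that the lemma asserts. In the one-dimensional model $\Omega=(-L,L)$ one has explicitly $z(x)=-w_*\cosh(\mu x)/\cosh(\mu L)$, which makes the phenomenon transparent.

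For comparison, the paper argues by eigenfunction expansion rather than barriers: writing $A=\sum_j a_j\phi_j$ in the Dirichlet basis, one solves for $W(v_*)=\sum_j a_j(v_*^2+1+d_w\lambda_j)^{-1}\phi_j$, subtracts the corresponding series for $w_*=A/(v_*^2+1)$, and bounds the difference termwise using the uniform eigenfunction bound of Assumption~\ref{O1}. Your boundary obstruction resurfaces there in disguise: the tail estimate requires $\sum_{j>N}|a_j|$ to be small, hence $\sum_j|a_j|<\infty$, but the Dirichlet--Fourier coefficients of a nonzero constant are not absolutely summable in general (on an interval they decay like $1/j$), precisely because such a series cannot converge uniformly to a nonzero value at $\partial\Omega$. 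So the spectral route does not genuinely circumvent the obstruction you identified.
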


\begin{proof}
    We denote with $\{\phi_j\}_{j \in \mathbb{N}} \subset L^2(\Omega)$ the set of eigenfunctions of $\Delta$ with Dirichlet boundary condition. Next, 
    we expand a constant function $A = \sum_{j=1}^\infty a_j \phi_j$. Since $W(v_*) \in L^2(\Omega)$ we can expand it in the eigenfunction basis as well, namely $W(v_*) = \sum_{j=1}^\infty c_j \phi_j$. Plugging this into equation \eqref{eq:v_stationary} we get that
    \begin{equation}
        \label{eq:Eigenfunctions_expansion}
        d_w\sum_{j=1}^\infty c_j \lambda_j \phi_j - (v_*^2 + 1) \sum_{j=1}^\infty c_j \phi_j + \sum_{j=1}^\infty a_j \phi_j = 0.
    \end{equation}
    We multiply equation \eqref{eq:Eigenfunctions_expansion} by each $\phi_j$ and then integrate. From orthogonality, we get that the following equation holds for all $j \in \mathbb{N}$
    \begin{equation}
        \label{eq:Eigenfunctions_expansion_coeffs}
        -d_w c_j\lambda_j - (v_*^2+1) c_j + a_j = 0.
    \end{equation}
    Now, solving equation \eqref{eq:Eigenfunctions_expansion_coeffs} for the coefficients $c_j$ we obtain
    \begin{equation*}
        c_j = \frac{a_j}{v_*^2 + 1 + d_w\lambda_j}.
    \end{equation*}
    It follows that
    \begin{equation*}
        W(v_*) = \sum_{j=1}^\infty \frac{a_j}{v_*^2 + 1 + d_w\lambda_j} \phi_j.
    \end{equation*}
    Next, we plug the eigenfunction expansion of the constant function $A$ into the definition of the steady state $w_* = \frac{A}{v_*^2 + 1}$. Observe that we assumed that $\Omega$ is such that $\{\phi_j\}_{j \in \mathbb{N}}$ is uniformly bounded by some number $C > 0$. This allows us to estimate
\begin{align*}
    \|W(v_*) - w_*\|_\infty 
    &= 
    \left\|
        \sum_{j=1}^\infty 
        \frac{a_j}{v_*^2 + 1 + d_w \lambda_j} \phi_j 
        - 
        \sum_{j=1}^\infty 
        \frac{a_j}{v_*^2 + 1} \phi_j
    \right\|_\infty 
    \\[6pt]
    &=
    \left\|
        \sum_{j=1}^\infty
        \left(
            \frac{1}{v_*^2 + 1} 
            - 
            \frac{1}{v_*^2 + 1 + d_w \lambda_j}
        \right) 
        a_j \phi_j
    \right\|_\infty
    \\[6pt]
    &\leq
    C 
    \sum_{j=1}^\infty 
    \left| 
        \frac{d_w \lambda_j a_j}{(v_*^2 + 1)(v_*^2 + 1 + d_w \lambda_j)}
    \right|
    \\[6pt]
    &=
    C 
    \sum_{j=1}^N 
    \left| 
        \frac{d_w \lambda_j}{v_*^2 + 1 + d_w \lambda_j}
    \right|
    \left| 
        \frac{a_j}{v_*^2 + 1} 
    \right|
+
    C
    \sum_{j=N+1}^\infty 
    \left| 
        \frac{d_w \lambda_j}{v_*^2 + 1 + d_w \lambda_j}
    \right|
    \left| 
        \frac{a_j}{v_*^2 + 1} 
    \right|.
\end{align*}

    For any fixed $\epsilon > 0$ there exists $N \in \mathbb{N}$ such that $\sum_{j=N+1}^\infty |a_j| < \frac{\epsilon}{2C}$. Hence, provided that $d_w < \frac{\epsilon}{2C} \left(\sum_{j=1}^N \frac{|\lambda_j a_j|}{(v_*^2 + 1)^2}\right)^{-1}$ we obtain the estimate
    \begin{equation}
        \|W(v_*) - w_* \|_\infty < \epsilon.
    \end{equation}
\end{proof}

Having that, we continue our analysis of stationary solutions by analyzing the existence of stationary vegetation solutions.

\subsection{Existence of non-constant vegetation profiles}

We use Lemmas \ref{lem:existence_of_inhomogeneous_elliptic_v} and \ref{lem:stationary_v_lipschitz_constant} to reduce the system \eqref{eq:u_stationary}--\eqref{eq:v_bound_stationary} to the following non-local and non-linear problem
\begin{numcases}{}
    0 = d_v \mathcal{L}v + f(v) & \hspace{2mm} $x \in \Omega,$ \label{eq:v_stationary_redef} \\
    v = 0 & \hspace{2mm} $x \in \partial\Omega,$ \label{eq:v_bound_stationary_redef}
\end{numcases}
where we denoted $f(v) = v^2W(v) - Bv$. Note that $W(v)$ is the stationary solution obtained by Lemma \ref{lem:existence_of_inhomogeneous_elliptic_v}.
\begin{lem}
    \label{lem:apriori_smallnes_d_v}
    Suppose that $f(0) = 0$ and $f$ is Lipschitz continuous with constant $M$. Denote by $\beta_1$ the principal eigenvalue of $-\mathcal{L}$. If
    \begin{equation}
        \label{eq:d_v_condition_1}
        d_v > \frac{M}{\beta_1},
    \end{equation}
    then there exists only one stationary solution $v \in L^2(\Omega)$ of the system \eqref{eq:v_stationary_redef}--\eqref{eq:v_bound_stationary_redef}, namely  $v \equiv 0$.
\end{lem}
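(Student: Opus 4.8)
The plan is to run a one-line energy estimate: test the stationary equation \eqref{eq:v_stationary_redef} against $v$ itself and combine the coercivity of $-\mathcal{L}$ with the Lipschitz control on $f$. Extending $v$ by zero outside $\Omega$ (as dictated by \eqref{eq:v_bound_stationary_redef}), the hypotheses $f(0)=0$ and $\mathrm{Lip}(f)=M$ give the pointwise bound $|f(v)|\le M|v|$, so $f(v)\in L^2(\Omega)$; since $\mathcal{L}$ is bounded on $L^2(\Omega)$, the identity $0=d_v\mathcal{L}v+f(v)$ holds in $L^2(\Omega)$ and may be paired with $v\in L^2(\Omega)$ and integrated over $\Omega$:
\[
0 \;=\; d_v\int_\Omega v\,\mathcal{L}v\,dx \;+\; \int_\Omega v\,f(v)\,dx .
\]

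Next I would feed in the two estimates. By the symmetry assumption \ref{J2} the operator $-\mathcal{L}$ is self-adjoint on $L^2(\Omega)$, and by the spectral results recalled in the proof of Lemma \ref{lem:A_SEMIGROUP_GENERATOR} its spectrum is discrete and strictly positive with infimum equal to the principal eigenvalue $\beta_1>0$; hence the Rayleigh-quotient characterization $\beta_1=\inf_{v\neq 0}\bigl(-\int_\Omega v\,\mathcal{L}v\,dx\bigr)/\|v\|_{L^2(\Omega)}^2$ holds, so that $\int_\Omega v\,\mathcal{L}v\,dx\le-\beta_1\|v\|_{L^2(\Omega)}^2$ for every $v$. (Equivalently, writing $-\mathcal{L}=I-K$ with $K$ the compact positivity-preserving convolution operator on $\Omega$, one has $\beta_1=1-\|K\|$; or one may use the symmetrized Dirichlet form $\int_\Omega v\,\mathcal{L}v=-\tfrac12\iint_{\mathbb{R}^n\times\mathbb{R}^n}J(x-y)(v(x)-v(y))^2\,dx\,dy$ together with the optimal nonlocal Poincaré constant $\beta_1$.) For the reaction term, $\int_\Omega v\,f(v)\,dx\le\int_\Omega|v|\,|f(v)|\,dx\le M\|v\|_{L^2(\Omega)}^2$.

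Substituting both bounds into the energy identity yields
\[
d_v\beta_1\,\|v\|_{L^2(\Omega)}^2 \;\le\; -\,d_v\int_\Omega v\,\mathcal{L}v\,dx \;=\; \int_\Omega v\,f(v)\,dx \;\le\; M\,\|v\|_{L^2(\Omega)}^2 ,
\]
i.e.\ $(d_v\beta_1-M)\|v\|_{L^2(\Omega)}^2\le 0$. The hypothesis \eqref{eq:d_v_condition_1} makes $d_v\beta_1-M>0$, forcing $\|v\|_{L^2(\Omega)}=0$, so $v\equiv 0$. Conversely $v\equiv 0$ is a solution since $f(0)=0$ and $\mathcal{L}0=0$, so it is the unique one.

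The only genuinely non-routine point — and the step I would be most careful with — is the coercivity inequality $\int_\Omega v\,\mathcal{L}v\le-\beta_1\|v\|_{L^2(\Omega)}^2$ for \emph{all} $v\in L^2(\Omega)$ rather than merely for eigenfunctions; this rests on self-adjointness of $-\mathcal{L}$ (Assumption \ref{J2}) and on $\beta_1$ being the bottom of its spectrum, both of which are supplied by the references already invoked for Lemma \ref{lem:A_SEMIGROUP_GENERATOR}. Everything else is elementary.
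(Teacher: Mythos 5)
Your argument is exactly the paper's: multiply the stationary equation by $v$, integrate, bound $\langle\mathcal{L}v,v\rangle\le-\beta_1\|v\|_2^2$ via the Rayleigh quotient, bound $\langle f(v),v\rangle=\langle f(v)-f(0),v\rangle\le M\|v\|_2^2$ via the Lipschitz hypothesis, and conclude $(d_v\beta_1-M)\|v\|_2^2\le 0$. Your extra care in justifying the coercivity inequality for all of $L^2(\Omega)$ (self-adjointness plus the spectral characterization of $\beta_1$) only makes explicit what the paper leaves implicit.
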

\begin{proof}
    Let $v \in L^2(\Omega)$ be a non-zero solution of \eqref{eq:v_stationary_redef}. We multiply equation \eqref{eq:v_stationary_redef} with $v$ and integrate. Applying the mean value theorem we estimate 
    \begin{equation*}
        0
        =
        d_v \langle \mathcal{L} v, v \rangle + \langle f(v), v \rangle
        =
        d_v \frac{\langle \mathcal{L} v, v \rangle}{\|v\|_2^2}\|v\|_2^2 + \langle f(v) - f(0), v \rangle
        \leq
        (-d_v \beta_1 + M)\|v\|_2^2.
    \end{equation*}
    Since $-d_v \beta_1 + M < 0$ it follows that $\|v\|_2 = 0$.
\end{proof}

\begin{rem}
    \label{rem:dispersal_patch_condition}
    From Lemma~\ref{lem:apriori_smallnes_d_v}, we can draw the following conclusions:
    \begin{itemize}
        \item For a fixed patch $\Omega \subset \mathbb{R}^n$, if the dispersal rate $d_v > 0$ is sufficiently large, the vegetation will go extinct. 
        \item For a fixed dispersal rate $d_v > 0$ the principal eigenvalues of $\mathcal{L}$ on any patches 
        $\Omega_1 \subset \Omega_2 \subset \mathbb{R}^n$ satisfy 
        $\beta_1(\Omega_2) < \beta_1(\Omega_1)$. 
        Hence, there exists a sufficiently large patch $\Omega$ such that vegetation will persist. 
    \end{itemize}
\end{rem}

\begin{thm}
    \label{thm:Stationary-Solutions-Existence}
    Denote by $(v_*, w_*)$ a non-zero solution of the system \eqref{EQ:ConstantSteadyStates}. Let $f : C_{0, \Omega}(\mathbb{R}^n) \rightarrow C_{0, \Omega}(\mathbb{R}^n)$ be twice Fr\'echet differentiable mapping. Assume that there exist sufficiently small $\epsilon > 0$ and numbers $M > 0, m > 0$ such that
    \begin{equation}
        \label{eq:nonlinearity_f_assumptions}
        \|f(v_*)\|_\infty \leq \epsilon, 
        \quad
        f'(v_*) < -m
        \quad
        \text{ for all }
        x \in \Omega
        \quad
        \text{ and }
        \quad
        \|f''\|_\infty < M.
    \end{equation}
    Then there exist $R_* > 0$ and $D_v > 0$ such that for all $d_v < D_v$ there exists a unique, non-zero solution $V$ of the problem \eqref{eq:v_stationary_redef}--\eqref{eq:v_bound_stationary_redef}, satisfying $\|V-v_*\|_\infty \leq R_*$.
\end{thm}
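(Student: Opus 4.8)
The plan is to solve the reduced non-local problem \eqref{eq:v_stationary_redef}--\eqref{eq:v_bound_stationary_redef} by a contraction-mapping argument in the Banach space $C_{0,\Omega}(\mathbb{R}^n)$, treating the reaction term $f$ as the dominant part and $d_v\mathcal{L}$ as a small perturbation. I would look for the solution in the form $V = v_* + \psi$, where $v_*$ is identified with its restriction to $\Omega$ (so that $V\in C_{0,\Omega}$ and the exterior condition \eqref{eq:v_bound_stationary_redef} is automatic), and expand $f$ to second order around $v_*$: since $f$ is twice Fr\'echet differentiable,
\begin{equation*}
f(v_*+\psi) = f(v_*) + f'(v_*)\psi + \mathcal{R}(\psi), \qquad \mathcal{R}(\psi) = \int_0^1 \bigl(f'(v_*+s\psi) - f'(v_*)\bigr)\psi \, ds,
\end{equation*}
so that, by $\|f''\|_\infty < M$ from \eqref{eq:nonlinearity_f_assumptions}, one has $\|\mathcal{R}(\psi)\|_\infty \le \tfrac{M}{2}\|\psi\|_\infty^2$ and, on a ball of radius $R_*$, $\|\mathcal{R}(\psi_1)-\mathcal{R}(\psi_2)\|_\infty \le M R_* \|\psi_1-\psi_2\|_\infty$. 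Substituting into \eqref{eq:v_stationary_redef} and isolating the linear term yields the fixed-point equation
\begin{equation*}
\psi = f'(v_*)^{-1}\Bigl[\,-d_v\,\mathcal{L}v_* - d_v\,\mathcal{L}\psi - f(v_*) - \mathcal{R}(\psi)\,\Bigr] =: \Phi(\psi).
\end{equation*}

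\textbf{Estimates and the smallness hierarchy.} The map $\Phi$ is well defined because the hypothesis $f'(v_*) < -m$ (established in Proposition \ref{prp:nonlinearity_f_properties_at_v_star}) guarantees that the bounded operator $f'(v_*)$ is invertible with $\|f'(v_*)^{-1}\| \le 1/m$. Using that $\mathcal{L}$ is bounded on $C_{0,\Omega}$ with $\|\mathcal{L}v\|_\infty \le 2\|v\|_\infty$, and that for the constant $v_*$ one even has $\|\mathcal{L}v_*\|_\infty \le |v_*|$ (because $\int_\Omega J(x-y)\,dy \in [0,1]$), together with $\|f(v_*)\|_\infty \le \epsilon$, I get on $\overline{B}(0,R_*)\subset C_{0,\Omega}$
\begin{equation*}
\|\Phi(\psi)\|_\infty \le \tfrac1m\bigl(|v_*| + 2R_*\bigr)d_v + \tfrac{\epsilon}{m} + \tfrac{M}{2m}R_*^2, \qquad \|\Phi(\psi_1)-\Phi(\psi_2)\|_\infty \le \tfrac1m\bigl(2d_v + M R_*\bigr)\|\psi_1-\psi_2\|_\infty.
\end{equation*}
I would then fix the constants in a definite order so there is no circular dependence: first choose $R_*>0$ so small that $R_* < |v_*|$ and $MR_*/m \le 1/4$; then choose $\epsilon>0$ so small that $\epsilon/m \le R_*/4$ (this is where Lemma \ref{lem:stationary_v_estimate_v_star} enters, since $\|f(v_*)\|_\infty = v_*^2\|W(v_*)-w_*\|_\infty$ can be made $\le\epsilon$ by taking $d_w$ small); finally choose $D_v>0$ so small that for every $d_v < D_v$ the $d_v$-dependent contributions above are $\le R_*/4$ and $\le 1/4$ respectively. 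With these choices $\Phi$ maps $\overline{B}(0,R_*)$ into itself and is a contraction, so Banach's fixed-point theorem provides a unique $\psi$ in this ball, hence a unique solution $V = v_* + \psi$ of \eqref{eq:v_stationary_redef}--\eqref{eq:v_bound_stationary_redef} with $\|V-v_*\|_\infty \le R_*$. Since $R_* < |v_*|$, we have $|V(x)| \ge |v_*| - R_* > 0$ for $x\in\Omega$, so $V\not\equiv 0$; this also separates $V$ from the trivial solution of Lemma \ref{lem:apriori_smallnes_d_v}, consistently with the fact that here $d_v$ is small (not large).

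\textbf{Main obstacle.} The contraction estimate itself is routine once the scheme is set up; the genuinely delicate ingredient is the invertibility and the norm bound for $f'(v_*)$ hidden behind the shorthand $f'(v_*)<-m$. The naive pointwise derivative of $v\mapsto v^2W(v)-Bv$ at $v_*$ equals $2v_*w_*-B = B>0$, so the negativity of $f'(v_*)$ is not a property of the ``diagonal'' part but relies essentially on the non-local water feedback carried by the term $v_*^2 W'(v_*)$ (an elliptic solution operator), and on selecting the branch $v_* = v_+$ under the regime $A>2B$; making this rigorous, with $\|f'(v_*)^{-1}\|\le 1/m$ and $\|f''\|_\infty\le M$ controlled uniformly in the small parameters, is exactly what Proposition \ref{prp:nonlinearity_f_properties_at_v_star} must supply. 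A conceptually equivalent alternative would be to first use the inverse function theorem to find $v_0$ near $v_*$ with $f(v_0)=0$ (possible since $f'(v_*)$ is boundedly invertible and $\|f(v_*)\|_\infty\le\epsilon$), so that $G(d,v):=d\,\mathcal{L}v+f(v)$ satisfies $G(0,v_0)=0$ with $\partial_vG(0,v_0)=f'(v_0)$ still invertible for $\epsilon$ small, and then apply the implicit function theorem in the parameter $d$ around $(0,v_0)$; the explicit contraction above is just the quantitative unfolding of this, and is preferable here because it delivers the constants $R_*$ and $D_v$ directly.
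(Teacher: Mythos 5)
Your proposal is correct and follows essentially the same route as the paper: write $V=v_*\mathds{1}_\Omega+\psi$, Taylor-expand $f$ about $v_*$ with the quadratic remainder controlled by $\|f''\|_\infty<M$, recast \eqref{eq:v_stationary_redef} as a fixed-point equation on a small ball, and apply Banach's theorem, using $R_*<v_*$ to conclude $V\not\equiv 0$. The only differences are cosmetic --- you invert $f'(v_*)$ alone and keep $d_v\mathcal{L}\psi$ in the forcing term, whereas the paper inverts the full linearization $-d_v\mathcal{L}-f'(v_*)\mathrm{Id}$ and determines $R_*$ as the smaller root of the resulting quadratic rather than by your explicit hierarchy of smallness conditions --- and your closing remark correctly identifies that the real content (invertibility of $f'(v_*)$ with a uniform bound, despite the positive ``diagonal'' contribution $2v_*w_*-B=B$) is exactly what Proposition \ref{prp:nonlinearity_f_properties_at_v_star} supplies via the water feedback $v_*^2W'(v_*)$ on the branch $v_*>1$.
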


\begin{proof}
    First, let us denote the closed ball $B_R = \{ \varphi \in C_{0, \Omega}(\mathbb{R}^n) : \|\varphi\|_\infty \leq R \}$. 
    Let $V$ be a function defined by
    \begin{equation}
        \label{eq:stationary_u_definition}
        V(x) = v_* \mathds{1}_{\Omega}(x) + \varphi(x),
    \end{equation}
    where $\varphi \in B_R$. Using the Taylor expansion for the nonlinearity $f$, we can write
    \begin{equation}
        \label{eq:f_taylor_expansion}
        f(V) = f(v_*) + f'(v_*) \varphi + \mathcal{N}(\varphi),
    \end{equation}
    where $\mathcal{N}(\varphi)$ represents the higher-order remainder term. Assuming the second derivative is bounded by $M$, we have the estimate $\|\mathcal{N}(\varphi)\|_\infty \leq \frac{M}{2} \|\varphi\|_\infty^2$. For simplicity, let us denote $C_2 = \frac{M}{2}$.
    
    Since $f'(v_*) < -m < 0$ and $\mathcal{L}$ is negative definite, the linear operator $-d_v \mathcal{L} - f'(v_*) \mathrm{Id}$ is invertible, with the norm of the inverse bounded by $(m - \beta_1 d_v)^{-1}$. Plugging \eqref{eq:f_taylor_expansion} into the system \eqref{eq:v_stationary_redef}--\eqref{eq:v_bound_stationary_redef} allows us to reformulate the problem as a fixed point equation
    \begin{equation}
        \label{eq:fixed_point_formulation}
        \varphi = \left( -d_v \mathcal{L} - f'(v_*) \mathrm{Id} \right)^{-1} \left( d_v v_* \mathcal{L} \mathds{1}_\Omega + f(v_*) + \mathcal{N}(\varphi) \right) =: \mathcal{F}(\varphi).
    \end{equation}
    We first show that $\mathcal{F}: B_R \to B_R$. Note that we assumed \eqref{eq:nonlinearity_f_assumptions}, hence estimating the supremum norm of \eqref{eq:fixed_point_formulation}, we obtain
    \begin{align}
        \label{eq:f_ball_to_ball_condition_1}
        \| \mathcal{F}(\varphi) \|_\infty 
        &\leq
        \frac{d_v v_* \|\mathcal{L}\mathds{1}_\Omega\|_\infty + \epsilon + C_2 R^2}{m - \beta_1 d_v}.
    \end{align}
    Furthermore, we demand $\|\mathcal{F}(\varphi)\|_\infty \leq R$. The minimal radius $R_*$ satisfying this quadratic inequality corresponds to the smaller root:
    \begin{equation}
        \label{eq:f_ball_to_ball_radius}
        R_* = \frac{m - \beta_1 d_v}{2 C_2}
        \left(1 - 
        \sqrt{ 
        1 
        -
        \frac{4 C_2 (d_v v_* \|\mathcal{L}\mathds{1}_\Omega\|_\infty + \epsilon)}{(m - \beta_1 d_v)^2}
        }
        \right).
    \end{equation}
    From Lemma \ref{lem:stationary_v_estimate_v_star}, the term under the square root is positive and close to 1 for sufficiently small $d_v, d_w$, ensuring $R_*$ is well-defined and positive. Furthermore, $R_* \to 0$ as the parameters vanish, ensuring $R_* < v_*$ so that $V$ remains non-negative.

    Secondly, we verify that $\mathcal{F}$ is a contraction. Let $\varphi, \psi \in B_{R_*}$. We start by estimating the difference of the images:
    \begin{equation}
        \| \mathcal{F}(\varphi) - \mathcal{F}(\psi) \|_\infty 
        \leq \frac{1}{m - \beta_1 d_v} \| \mathcal{N}(\varphi) - \mathcal{N}(\psi) \|_\infty.
    \end{equation}
    To bound the difference of the nonlinear remainders, we re-write equation \eqref{eq:f_taylor_expansion} and use the Fundamental Theorem of Calculus:
    \begin{align}
        \begin{split}
            \mathcal{N}(\varphi) - \mathcal{N}(\psi) 
            &= \left( f(v_* + \varphi) - f(v_* + \psi) \right)  - f'(v_*) (\varphi - \psi)\\
            &=
            \int_0^1 \left( f'(v_* + \psi + t(\varphi - \psi)) - f'(v_*) \right) (\varphi - \psi) \, dt.
        \end{split}
    \end{align}
    Using the assumption that $\|f''\|_\infty \leq M$, we can bound the term in the integrand as $\|f'(v_* + \eta) - f'(v_*)\|_\infty \leq M \|\eta\|_\infty$. Applying this along with the triangle inequality for the convex combination $\|\psi + t(\varphi - \psi)\|_\infty \leq (1-t)\|\psi\|_\infty + t\|\varphi\|_\infty$, we obtain:
    \begin{align}
        \| \mathcal{N}(\varphi) - \mathcal{N}(\psi) \|_\infty 
        &\leq M \|\varphi - \psi\|_\infty \int_0^1 \left( (1-t)\|\psi\|_\infty + t\|\varphi\|_\infty \right) dt \\
        &= \frac{M}{2} (\|\varphi\|_\infty + \|\psi\|_\infty) \|\varphi - \psi\|_\infty.
    \end{align}
    Recalling that $C_2 = M/2$ and $\varphi, \psi \in B_{R_*}$, we arrive at the estimate:
    \begin{equation}
        \| \mathcal{N}(\varphi) - \mathcal{N}(\psi) \|_\infty \leq 2 C_2 R_* \|\varphi - \psi\|_\infty.
    \end{equation}
    Thus, the Lipschitz constant $k$ for the operator $\mathcal{F}$ is given by:
    \begin{equation}
        \label{eq:f_contraction_condition_1}
        k = \frac{2 C_2 R_*}{m - \beta_1 d_v}.
    \end{equation}
    Substituting the explicit formula for $R_*$ from \eqref{eq:f_ball_to_ball_radius} into \eqref{eq:f_contraction_condition_1}, we find:
    \begin{equation}
        \label{eq:f_contraction_condition_2}
        k = 1 - 
        \sqrt{ 
        1 
        -
        \frac{4 C_2 (d_v v_* \|\mathcal{L}\mathds{1}_\Omega\|_\infty + \epsilon)}{(m - \beta_1 d_v)^2}
        }.
    \end{equation}
    Since the discriminant is positive, the square root is real and positive, guaranteeing $k < 1$. Thus, $\mathcal{F}$ is a contraction on $B_{R_*}$. By the Banach Fixed Point Theorem, there exists a unique solution $\varphi \in B_{R_*}$, completing the proof.
\end{proof}

Now, we show that the assumptions of Theorem \ref{thm:Stationary-Solutions-Existence} hold true in the case of the Klausmeier model, i.e.\ $f(v) = v^2W(v) - Bv$. We start with a detailed investigation of the uniform solutions $(v_*, w_*)$.

\begin{prp}
    \label{prp:ExistenceOfConstantSSKineticSystem}
    Let $A > 0$ and $B > 0$. The following statements hold true.
    \begin{itemize}
        \item If $A \geq 2B$ then there exist three or two real solutions of \eqref{EQ:ConstantSteadyStates}.
        \item If $A < 2B$ then there exists only one real solution of \eqref{EQ:ConstantSteadyStates}.
    \end{itemize}
\end{prp}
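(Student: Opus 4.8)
The plan is to solve the purely algebraic system \eqref{EQ:ConstantSteadyStates} by elimination, reducing it to a single quadratic whose discriminant controls the number of solutions.

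First I would factor the first equation as $v(vw - B) = 0$, splitting the analysis into the case $v = 0$ and the case $vw = B$. If $v = 0$, the second equation forces $w = A$, giving the trivial desert state $(0, A)$, which is present for all $A, B > 0$; this is the solution that is always there. If instead $vw = B$ (so $v \neq 0$ and $w = B/v$), I would exploit the identity $v^2 w = v(vw) = Bv$ to turn the second equation into $-Bv - w + A = 0$, i.e.\ $w = A - Bv$. Equating the two expressions for $w$ and clearing the denominator yields
\begin{equation*}
    Bv^2 - Av + B = 0,
\end{equation*}
whose discriminant is $A^2 - 4B^2$.

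The conclusion then follows from a case split on the sign of $A^2 - 4B^2$, which for $A, B > 0$ is nonnegative precisely when $A \geq 2B$. If $A > 2B$ there are two distinct real roots (both positive, since their product equals $1$ and their sum equals $A/B$), producing two non-trivial uniform states in addition to the desert state, hence three real solutions. If $A = 2B$ the quadratic has a double root $v = 1$, producing one non-trivial state and hence two real solutions in total. If $A < 2B$ there is no real root, so the desert state is the unique real solution.

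I do not anticipate any genuine obstacle here: the computation is entirely elementary. The only points requiring a moment of care are not double-counting the coalescing roots at $A = 2B$ and recording that the two roots obtained when $A > 2B$ automatically lie in the admissible range $v > 0$, which will be relevant when these uniform states feed into Proposition \ref{prp:nonlinearity_f_properties_at_v_star} and Theorem \ref{thm:Stationary-Solutions-Existence}.
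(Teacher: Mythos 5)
Your proposal is correct and follows essentially the same route as the paper: both arguments eliminate $w$ and reduce the system to the quadratic $Bv^2 - Av + B = 0$ (the paper writes it as the cubic $-Bv^3 + Av^2 - Bv = 0$ before factoring out the root $v = 0$), with the discriminant $A^2 - 4B^2$ deciding the count. Your added observation that the two roots for $A > 2B$ are automatically positive (product $1$, sum $A/B$) is a small bonus the paper leaves implicit.
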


\begin{proof}
    From the second equation in \eqref{EQ:ConstantSteadyStates}, we get that $w = \frac{A}{v^2 + 1}$. Thus the first equation becomes
    \begin{equation}
        \label{EQ:StationaryPolynomial}
        -B v^3 + A v^2 - B v = 0.
    \end{equation}
    For all parameters $A > 0, B > 0$ we can find the solution of the following form
    \begin{align}
        \label{EQ:FirstSS}
        \left(v_{*,1}, w_{*,1}\right) &= \left(0, A\right).
    \end{align}
    Provided that $A > 2B$, the equation \eqref{EQ:StationaryPolynomial} admits two more solutions
    \begin{align}
        \left(v_{*,2}, w_{*,2}\right) &= \left(\frac{A - \sqrt{A^2 - 4B^2}}{2B}, \frac{2 B^2}{A - \sqrt{A^2 - 4B^2}}\right), \label{EQ:SecondSS}\\
        \left(v_{*,3}, w_{*,3}\right) &= \left(\frac{A + \sqrt{A^2 - 4B^2}}{2B}, \frac{2 B^2}{A + \sqrt{A^2 - 4B^2}}\right). \label{EQ:ThirdSS}
    \end{align}
    Finally, if $A = 2B$, then the solutions \eqref{EQ:SecondSS} and \eqref{EQ:ThirdSS} merge into 
    \begin{align}
        \label{EQ:FourthSS}
        \left( v_{*,4}, w_{*,4} \right) &= \left(1, B\right).
    \end{align}
    For $A < 2B$ there are no other real solutions of \eqref{EQ:StationaryPolynomial} than \eqref{EQ:FirstSS}.
\end{proof}

Now we conclude that there is only one uniform solution $(v_*, w_*)$ which satisfies the assumptions of Theorem \ref{thm:Stationary-Solutions-Existence}.

\begin{prp}
    \label{prp:nonlinearity_f_properties_at_v_star}
    Let $A > 2B$, and $W(v_*)$ be a stationary solution of the problem \eqref{eq:v_stationary}, \eqref{eq:v_bound_stationary}, with $v_* = v_{*, 3} > 1$ being a non-zero solution given by Proposition \ref{prp:ExistenceOfConstantSSKineticSystem}. Denote the non-linear function $f(v) = v^2W(v) - Bv$. Then for every $\epsilon > 0$ there exists $d_w > 0$, sufficiently small, and numbers $m > 0, M >0$ such that
    \begin{equation}
        \label{eq:nonlinearity_f_condition_existence}
        \|f(v_*)\|_\infty \leq \epsilon, 
        \quad
        f'(v_*) < -m
        \quad
        \text{ for all }
        x \in \Omega,
        \quad
        \|f''(v_*)\|_\infty < M.
    \end{equation}
\end{prp}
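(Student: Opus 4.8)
The plan is to verify the three conditions in \eqref{eq:nonlinearity_f_condition_existence} one at a time, exploiting the explicit form $f(v) = v^2 W(v) - Bv$ together with the closeness of $W(v_*)$ to the constant state $w_*$ established in Lemma~\ref{lem:stationary_v_estimate_v_star}. First I would record the elementary facts about the constant kinetic state: since $v_* = v_{*,3}$ solves $v_*^2 w_* - B v_* = 0$ with $w_* = A/(v_*^2+1)$, we have $v_*^2 w_* = B v_*$, i.e.\ $v_* w_* = B$ and $w_* = B/v_*$; moreover $A > 2B$ forces $v_* = v_{*,3} > 1$ (as stated). These identities are what make the ``constant'' version of $f$ vanish, and everything reduces to controlling the discrepancy $W(v_*) - w_*$.

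For the bound $\|f(v_*)\|_\infty \le \epsilon$: I would write
\begin{equation*}
    f(v_*) = v_*^2 W(v_*) - B v_* = v_*^2\bigl(W(v_*) - w_*\bigr) + \bigl(v_*^2 w_* - B v_*\bigr) = v_*^2\bigl(W(v_*) - w_*\bigr),
\end{equation*}
so $\|f(v_*)\|_\infty \le v_*^2 \|W(v_*) - w_*\|_\infty$. Given $\epsilon > 0$, apply Lemma~\ref{lem:stationary_v_estimate_v_star} with tolerance $\epsilon/v_*^2$ to obtain $d_w$ small enough that $\|W(v_*) - w_*\|_\infty < \epsilon/v_*^2$, which gives the first estimate. (Here I am implicitly treating $v_*$ as a fixed constant determined by $A,B$, so $v_*^2$ is a harmless multiplicative factor.)

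For the derivative conditions, I would compute the Fréchet derivative of $v \mapsto v^2 W(v) - Bv$. Since $v \mapsto v^2$ is smooth and $W$ is affine-to-Lipschitz via Lemmas~\ref{lem:existence_of_inhomogeneous_elliptic_v}--\ref{lem:stationary_v_lipschitz_constant}, one has $f'(v)[\,\cdot\,] = 2v W(v)[\,\cdot\,] + v^2 W'(v)[\,\cdot\,] - B[\,\cdot\,]$, where $W'(v)$ is the (bounded) solution operator of the linearized elliptic problem $d_w\Delta z - (v^2+1) z = 2 v W(v) \,\delta v$ with zero boundary data; by elliptic regularity $\|W'(v)\| \le$ (const depending on $A$, $\|v\|_\infty$), and in fact $W'(v) \to 0$ in operator norm as $d_w \to 0$ by the same eigenfunction-expansion argument as in Lemma~\ref{lem:stationary_v_estimate_v_star}. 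Evaluating at $v_*$ and using $W(v_*) \approx w_* = B/v_*$, the dominant term is $2 v_* w_* = 2B$, so $f'(v_*) = 2B - B + o(1) = B + o(1)$ as $d_w \to 0$ --- but this is \emph{positive}, which would contradict $f'(v_*) < -m$. This sign discrepancy is the point I expect to be the main obstacle: it forces a careful recheck of which root is being used and of the sign conventions. The resolution is that $v_{*,3}$ is the \emph{upper} branch, where the kinetic derivative $\partial_v(v^2 w_*(v) - Bv)$ with $w_*(v) = A/(v^2+1)$ equals $\frac{2Av}{(v^2+1)^2} - \frac{Av^2\cdot 2v}{(v^2+1)^2}\cdot\frac{1}{1} \cdots$ --- i.e.\ one must differentiate through the \emph{constraint} $w = A/(v^2+1)$, obtaining $\frac{d}{dv}\!\left(\frac{Av^2}{v^2+1} - Bv\right) = \frac{2Av}{(v^2+1)^2} - B$, and at $v_* = v_{*,3}$ a direct computation (using $Av_*^2 = B v_*^3 + B v_*$, i.e.\ $A = B(v_*^2+1)/v_*$) gives $\frac{2Av_*}{(v_*+1)^2}-B = \frac{2B}{v_*^2+1} - B = B\frac{1-v_*^2}{1+v_*^2} < 0$ since $v_* > 1$. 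So $f'(v_*) < 0$ with a quantitative gap $m := B\frac{v_*^2-1}{v_*^2+1} - (\text{correction } o(1) \text{ from } d_w)$, and the relevant linearization of $f = v^2 W(v) - Bv$ at $v_*$ coincides with this constrained kinetic derivative up to a $d_w \to 0$ error (because $W'(v_*)$ encodes exactly the linearized constraint and converges to multiplication by $-2v_*w_*/(v_*^2+1) = -2B/(v_*^2+1)$ as $d_w\to 0$). Choosing $d_w$ small makes the correction smaller than half the gap, yielding $f'(v_*) < -m$ with $m = \tfrac12 B\frac{v_*^2-1}{v_*^2+1}$.

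Finally, for $\|f''(v_*)\|_\infty < M$: differentiating once more, $f''(v) = 2 W(v) + 4v W'(v) + v^2 W''(v)$, and since $W$, $W'$, $W''$ are all bounded operators with norms controlled by $A$ and $\|v\|_\infty$ (uniformly for $d_w$ in a bounded range, by the elliptic estimates of Lemma~\ref{lem:stationary_v_lipschitz_constant} applied iteratively to the linearized problems), we simply take $M$ to be any number exceeding this bound; this is the routine step. Assembling the three pieces with a single choice of $d_w$ small enough to serve all of them simultaneously completes the proof.
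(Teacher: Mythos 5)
Your proposal is correct and follows essentially the same route as the paper's proof: the identity $f(v_*) = v_*^2\bigl(W(v_*)-w_*\bigr)$ combined with Lemma~\ref{lem:stationary_v_estimate_v_star} for the first bound, the identification of $\lim_{d_w\to 0}W'(v_*)$ as multiplication by $-2v_*w_*/(v_*^2+1) = -2B/(v_*^2+1)$ yielding $f'(v_*)\to B\frac{1-v_*^2}{1+v_*^2}<0$ for $v_*=v_{*,3}>1$, and boundedness of $W$, $W'$, $W''$ for the second-derivative estimate. The only blemish is your transient claim that $W'(v)\to 0$ in operator norm as $d_w\to 0$ (which, as you note, would give the wrong sign $f'(v_*)\to B>0$); your own ``resolution'' correctly discards it in favour of the algebraic limit $-(v_*^2+1)\,W'(v_*)h = 2v_*w_*h$ of the linearized elliptic problem, which is precisely the step the paper uses.
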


\begin{proof}
    The first inequality, $\|f(v_*)\|_\infty \leq \epsilon$, follows directly from Lemma \ref{lem:stationary_v_estimate_v_star} and the fact that the constant steady states satisfy $v_*^2w_* - Bv_* = 0$. We have:
    \begin{equation*}
        \|f(v_*)\|_\infty 
        =
        \|v_*^2W(v_*) - Bv_* - (v_*^2w_* - Bv_*)\|_\infty
        =
        v_*^2\|W(v_*) - w_*\|_\infty.
    \end{equation*}
    By Lemma \ref{lem:stationary_v_estimate_v_star}, for any $\epsilon' > 0$, we can choose $d_w > 0$ small enough such that $\|W(v_*) - w_*\|_\infty < \epsilon'$. Setting $\epsilon' = \epsilon / v_*^2$ yields the desired bound.

    Next, we analyze the Fr\'echet derivative of $f$. Since $v_*w_* = B$, the derivative evaluated at the constant function $v_*$ is given by
    \begin{align*}
        f'(v_*) 
        &=
        2v_*W(v_*) + v_*^2W'(v_*) - B \\
        &=
        2v_*(W(v_*) - w_*) + 2v_*w_* + v_*^2W'(v_*) - B \\
        &=
        2v_*(W(v_*) - w_*) + v_*^2W'(v_*) + B.
    \end{align*}
    We analyze the terms in the limit as $d_w \to 0$. The first term vanishes, since $\|W(v_*) - w_*\|_\infty \to 0$. The operator $W'(v_*)$ is the Fr\'echet derivative of the map $v \mapsto W(v)$, which is the solution operator of the water equation $d_w \Delta W - (v^2+1)W + A = 0$. By differentiating this equation with respect to $v$ in the direction of a perturbation $h$, we find that $W'(v_*)h$ is the solution to
    \begin{equation*}
        (d_w \Delta - (v_*^2+1)\mathrm{Id})(W'(v_*)h) = 2v_* W(v_*) h.
    \end{equation*}
    In the limit $d_w \to 0$, this becomes an algebraic relation: $-(v_*^2+1)W'(v_*)h = 2v_* w_* h$. Thus, the operator $W'(v_*)$ converges in the operator norm to multiplication by the constant $-\frac{2v_*w_*}{v_*^2+1} = -\frac{2B}{v_*^2+1}$.
    
    Consequently, the derivative $f'(v_*)$ converges to a constant value as $d_w \to 0$:
    \begin{equation*}
        \lim_{d_w \to 0} f'(v_*) = 0 + v_*^2 \left( -\frac{2B}{v_*^2+1} \right) + B = B \left( 1 - \frac{2v_*^2}{v_*^2+1} \right) = B \frac{1-v_*^2}{1+v_*^2}.
    \end{equation*}
    As stated in the proposition, we consider the steady state $v_* = v_{*,3} > 1$, which implies $1 - v_*^2 < 0$. Therefore, the limit of $f'(v_*)$ is a strictly negative constant. Let this limit be $L < 0$. By the continuity of the derivative with respect to $d_w$, for any small $\delta > 0$, we can choose $d_w$ sufficiently small such that $f'(v_*) < L + \delta$. By choosing $\delta < -L/2$, we ensure that $f'(v_*) < L/2 < 0$. We can thus define a positive constant $m = -L/2$, satisfying $f'(v_*) < -m < 0$ for all $x \in \Omega$.

    Finally, we verify that the second Fr\'echet derivative of $f$ is bounded. The second derivative, $f''(v_*)$, is a bilinear operator that acts on a pair of perturbations $(h, k)$. Differentiating $f'(v)[k]$ with respect to $v$ in the direction $h$, and evaluating at $v=v_*$, yields:
    \begin{equation*}
        f''(v_*)[h, k] = 2W(v_*)hk + 2v_*\left( (W'(v_*)[h])k + (W'(v_*)[k])h \right) + v_*^2(W''(v_*)[h,k]).
    \end{equation*}
    To find $W''(v_*)$, we differentiate the equation for $W'(v)[k]$ once more:
    \begin{equation*}
        (d_w \Delta - (v_*^2+1)\mathrm{Id})(W''(v_*)[h, k]) = 2W(v_*)hk + 2v_*\left( (W'(v_*)[h])k + (W'(v_*)[k])h \right).
    \end{equation*}
    Since the operator on the left-hand side is invertible and its inverse is bounded, $W''(v_*)$ is a bounded bilinear operator. As $\|W(v_*)\|_\infty$ and $\|W'(v_*)\|_\infty$ are also bounded, it follows from the triangle inequality that $\|f''(v_*)\|_\infty$ is bounded. We can therefore define a constant $M$ as
    \begin{equation*}
        M := 2\|W(v_*)\|_\infty + 4v_*\|W'(v_*)\|_\infty + v_*^2\|W''(v_*)\|_\infty,
    \end{equation*}
    which is finite and positive. This completes the proof.
\end{proof}

We continue by analyzing stability of the stationary solutions obtained in this section.

\section{Stability analysis}

We derive the conditions under which the vegetation will survive or go extinct.

\subsection{Extinction of vegetation} 

We show that a sufficiently small initial density concentration leads to the extinction of vegetation.

\begin{thm}
    \label{thm:Exinction-Of-Vegetation}
    Let $M = \max\{\|v_0\|_\infty, A\}$. Denote an interval $\Gamma  = \left[0, \frac{B}{M}\right]$. Suppose that $v_0(x) \in \Gamma$ for all $x \in \Omega$. Then the solution $(v, w)$ of the problem \eqref{eq:u}--\eqref{eq:v_init} satisfies
    \begin{equation}
        v(x,t) \xrightarrow{t \rightarrow \infty} 0, \quad w(x,t) \xrightarrow{t \rightarrow \infty} W_0(x),
    \end{equation}
    where we denoted the stationary water profile $W_0 := W(0)$ from Lemma \ref{lem:existence_of_inhomogeneous_elliptic_v}.
\end{thm}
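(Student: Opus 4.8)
The plan is to decouple the statement into two successive limits: first $v(\cdot,t)\to 0$, and then, feeding this into the water equation, $w(\cdot,t)\to W_0$. For the extinction of $v$ I would reuse the barrier $\nu(t):=\max_{x\in\Omega}v(x,t)$ from the proof of Theorem~\ref{thm:Non-negativity}. Evaluating $\mathcal{L}v\le 0$ at a maximum point of $v(\cdot,t)$ (which follows from \eqref{EQ:DispersalOperatorDefinition_2}, the non-negativity of $v$, and the normalisation \ref{J5}), using $w\ge 0$, and justifying the Dini derivative by the Rademacher/mean-value argument exactly as for \eqref{eq:differential_inequality_max}, one obtains
\begin{equation*}
  \nu'(t)\ \le\ \|w(\cdot,t)\|_\infty\,\nu(t)^2 - B\,\nu(t),\qquad \nu(0)=\|v_0\|_\infty\le \tfrac{B}{M}.
\end{equation*}
The parabolic maximum principle applied to \eqref{eq:v} after dropping the nonnegative term $-v^2w$ gives $\|w(\cdot,t)\|_\infty\le A+(\|w_0\|_\infty-A)^{+}e^{-t}$, so the supercritical excess $\int_0^\infty(\|w(\cdot,t)\|_\infty-A)^{+}\,dt$ is finite and $\limsup_{t\to\infty}\|w(\cdot,t)\|_\infty\le A\le M$.

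I would then linearise this Bernoulli-type inequality via $u:=1/\nu$ (well defined since $\nu(t)\ge\nu(0)e^{-(d_v+B)t}>0$ when $v_0\not\equiv0$, the case $v_0\equiv0$ being trivial), which satisfies $u'\ge Bu-\|w(\cdot,t)\|_\infty$, and integrate to get
\begin{equation*}
  u(t)\ \ge\ e^{Bt}\Big(u(0)-\int_0^t e^{-Bs}\|w(\cdot,s)\|_\infty\,ds\Big),\qquad u(0)=\|v_0\|_\infty^{-1}\ge \tfrac{M}{B}.
\end{equation*}
Since $\int_0^\infty e^{-Bs}\|w(\cdot,s)\|_\infty\,ds\le \tfrac{A}{B}+\tfrac{(\|w_0\|_\infty-A)^{+}}{B+1}$, the parenthesis is bounded below by a positive constant for all large $t$; this is exactly where the threshold $\|v_0\|_\infty\le B/M$ enters, and in the borderline equality case one recovers strictness from the dispersal term $d_v\mathcal{L}v$ discarded at the maximum. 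Hence $u(t)\to\infty$, i.e.\ $0\le v(x,t)\le\nu(t)\to 0$, in fact exponentially.

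For the convergence of $w$, set $z:=w-W_0$ with $W_0=W(0)$ the solution of \eqref{eq:v_stationary}--\eqref{eq:v_bound_stationary} for $v\equiv0$ from Lemma~\ref{lem:existence_of_inhomogeneous_elliptic_v}. Subtracting the equations, $z$ solves $z_t=d_w\Delta z-z-v^2w$ in $\Omega$ with $z=0$ on $\partial\Omega$, and the forcing obeys $\|v^2w(\cdot,t)\|_\infty\le\|v(\cdot,t)\|_\infty^2\,\|w(\cdot,t)\|_\infty\to0$. By Lemma~\ref{lem:B_SEMIGROUP_GENERATOR} the semigroup $e^{t\mathcal{B}}$ generated by $d_w\Delta-\mathrm{Id}$ is a contraction on $C_0(\Omega)$ with $\|e^{t\mathcal{B}}\|\le e^{-t}$, so Duhamel's formula $z(t)=e^{t\mathcal{B}}z(0)+\int_0^t e^{(t-s)\mathcal{B}}(-v^2w)(s)\,ds$ together with the elementary fact that $\int_0^t e^{-(t-s)}g(s)\,ds\to0$ whenever $g(s)\to0$ yields $\|z(\cdot,t)\|_\infty\to0$; equivalently one squeezes $w$ between the stationary sub/super-solutions $W_0\pm\varepsilon$ once the forcing drops below $\varepsilon$. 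This proves $w(\cdot,t)\to W_0$ uniformly.

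The step I expect to be delicate is the first one: when $\|w_0\|_\infty$ is large the biomass $\nu(t)$ may increase before it decays, so extinction does not follow from a crude comparison with the autonomous logistic ODE, and the non-autonomous Bernoulli estimate above is what closes the argument, hinging on the integrability of $(\|w(\cdot,t)\|_\infty-A)^{+}$ weighed against the smallness of $\|v_0\|_\infty$. A secondary point is that all convergences must be made uniform in $x$ rather than merely in $L^2(\Omega)$; this is supplied by the maximum-principle and eigenfunction estimates already used in Theorem~\ref{thm:Non-negativity} and Lemma~\ref{lem:stationary_v_estimate_v_star}, which is where Assumption~\ref{O1} enters.
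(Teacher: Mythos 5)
Your treatment of the water component is essentially the paper's: both set $\psi=w-W_0$ (your $z$), apply Duhamel with the contraction semigroup generated by $d_w\Delta-\mathrm{Id}$, and absorb the forcing $v^2w$ using the decay of $v$; your soft argument ($\int_0^t e^{-(t-s)}g(s)\,ds\to0$ whenever $g\to0$) is if anything cleaner than the paper's explicit integral, which divides by $B-1$. The divergence is in the first step. The paper simply reuses the invariant-region bound \eqref{EQ:AprioriEsitmate_Unvariant_Region} from Theorem \ref{thm:Non-negativity} with $R_1$ replaced by $M$ and reads off $v\to0$ from the growing exponential in the denominator. You instead replace the constant bound on $\|w(\cdot,t)\|_\infty$ by the time-dependent one $A+(\|w_0\|_\infty-A)^+e^{-t}$ and run a non-autonomous Bernoulli estimate on $u=1/\nu$. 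Your motivation is legitimate: the theorem's $M=\max\{\|v_0\|_\infty,A\}$ carries no information about $\|w_0\|_\infty$, so the logistic comparison $\nu'\le M\nu^2-B\nu$ used implicitly by the paper is only valid if one reads $M$ as $\max\{\|w_0\|_\infty,A\}$, which is almost certainly the intended statement.

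However, your fix does not close. The integrating-factor bound gives $u(t)\ge e^{Bt}\bigl(u(0)-\int_0^t e^{-Bs}\|w(\cdot,s)\|_\infty\,ds\bigr)$ with $u(0)\ge M/B$ and $\int_0^\infty e^{-Bs}\|w(\cdot,s)\|_\infty\,ds\le A/B+(\|w_0\|_\infty-A)^+/(B+1)$. When $\|v_0\|_\infty\le A$ one has $M=A$, so your estimates bound the bracket from below only by $-(\|w_0\|_\infty-A)^+/(B+1)\le 0$: it is not ``bounded below by a positive constant,'' and $u(t)\to\infty$ does not follow. Even in the benign case $\|w_0\|_\infty\le A$ the bracket's limit is merely $\ge 0$, which is vacuous in the borderline case $\|v_0\|_\infty=B/M$; to obtain strict positivity you would need either $\|v_0\|_\infty<B/M$ or the (true but unproved here) strict bound $\limsup_{t}\|w(\cdot,t)\|_\infty\le\|W_0\|_\infty<A$. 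The same borderline defect sits in the paper's own one-line deduction (when $B-M\nu(0)=0$ the right-hand side of \eqref{EQ:AprioriEsitmate_Unvariant_Region} is the constant $B/M$), and your proposed repair via ``strictness from the discarded dispersal term'' does not work in general: under \ref{J1}--\ref{J5} the kernel may be compactly supported, so at an interior maximum $\zeta$ far from $\partial\Omega$ one can have $\int_\Omega J(\zeta-y)\,dy=1$ and hence $\mathcal{L}v(\zeta)=0$. To make your route rigorous, either assume $M\ge\|w_0\|_\infty$ (restoring the paper's reading of $M$) or first establish $\sup_{s\ge t_0}\|w(\cdot,s)\|_\infty\le A-\delta$ for some $t_0,\delta>0$ before launching the Bernoulli estimate.
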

\begin{proof}
    The first limit is a consequence of the bound \eqref{EQ:AprioriEsitmate_Unvariant_Region}. Since $(B - M\nu(0)) \geq 0$ we get that for all $x \in \Omega$
    \begin{equation*}
        v(x,t) \leq \frac{B\nu(0)}{M \nu(0) + 
        (B - M \nu(0)) e^{B t}} \rightarrow 0 \quad \text{as} \quad t \rightarrow \infty.
    \end{equation*}
    Now, let us replace the quadratic term in equation \eqref{eq:v_stationary} with zero to get
    \begin{equation}
        \label{eq:v_elliptic_linear_part}
        d_w \Delta W - W + A = 0, \quad x \in \Omega, \quad W = 0, \quad x \in \partial \Omega.
    \end{equation}
    Recall that Lemma \ref{lem:existence_of_inhomogeneous_elliptic_v} yields existence of a non-constant solution $W_0$ of the problem \eqref{eq:v_elliptic_linear_part}. Denote $\psi(x,t) = w(x,t) - W_0(x)$. One can easily verify that it satisfies the following partial differential equation.
    \begin{align}
        \label{eq:w_parabolic}
        \begin{cases}
            \psi_t = d_w\Delta \psi - \psi - v^2w &\quad t > 0, x \in \Omega, \\
            \psi = 0 &\quad t > 0, x \in \partial\Omega, \\
            \psi(0) = w_0 - W_0 &\quad x \in \Omega.
        \end{cases}
    \end{align}
    Denote by $e^{t(d_w\Delta - \mathrm{Id})}$ the heat semigroup, then apply the Duhamel principle to equation \eqref{eq:w_parabolic}
    \begin{equation}
        \label{eq:w_duhmel_principle}
        \psi(t) = e^{t(d_w\Delta - \mathrm{Id})} \psi(0) - \int_0^t e^{(t-s)(d_w\Delta - \mathrm{Id})} v^2(s) w(s)\:ds.
    \end{equation}
    Equation \eqref{eq:w_duhmel_principle} allows us to estimate the $L^\infty$ norm of $u$.
    \begin{align}
        \label{eq:w_infty_estimate}
        \begin{split}
            \|\psi(t)\|_\infty 
            &\leq e^{-t}\|\psi(0)\|_\infty + \int_0^t e^{-(t-s)} \|v^2(s)\|_\infty \|w(s)\|_\infty\:ds \\
            &\leq e^{-t}\|\psi(0)\|_\infty + M \int_0^t e^{-(t-s)} \|v^2(s)\|_\infty \:ds.
        \end{split}
    \end{align}
    Now, we use inequality \eqref{EQ:AprioriEsitmate_Unvariant_Region} to estimate the integral term as follows
    \begin{align}
        \label{eq:w_infty_estimate_integral}
        \begin{split}
            M \int_0^t e^{-(t-s)} \|v^2(s)\|_\infty \:ds 
            &\leq
            \frac{M B \nu(0)}{B-M \nu(0)} \int_0^t e^{-(t-s)} e^{-Bs}\:ds \\
            &= 
            \frac{M B \nu(0)}{(B-M \nu(0))(B-1)} e^{-t} 
            - 
            \frac{M B \nu(0)}{(B - M \nu(0))(B-1)} e^{-Bt}.
        \end{split}
    \end{align}
    Combining inequalities \eqref{eq:w_infty_estimate} and \eqref{eq:w_infty_estimate_integral} yields
    \begin{align*}
        \|\psi(t)\|_\infty 
        &\leq \left( \|\psi(0)\|_\infty + \frac{M B \nu(0)}{(B-M\nu(0))(B-1)} \right) e^{-t} \\
        &- \frac{M B \nu(0)}{(B-M\nu(0))(B-1)} e^{-Bt}.
    \end{align*}
    Thus, it follows that $\|\psi(t)\|_\infty \rightarrow 0$ as $t \rightarrow \infty$.
\end{proof}

Roughly speaking Theorem~\ref{thm:Exinction-Of-Vegetation} states that zero vegetation is a stable solution, while non-zero vegetation is unstable for too small initial vegetation. We continue by addressing the question under which conditions the vegetation can persist. 

\subsection{Survival of vegetation}

We show the asymptotic stability of the non-zero stationary solutions, ensuring the survival of vegetation.

\begin{thm}
    \label{thm:Stationary-Solutions-Stability}
    Let the parameters $A > 0, B >0$ satisfy $A > 2B$. Let $(v_*, w_*)$ be the constant solutions from Proposition \ref{prp:ExistenceOfConstantSSKineticSystem}, satisfying $v_* > 1$. We denote by $(V, W)$ the stationary solution contained in a neighborhood $B_R(v_*, w_*)$ of radius $R>0$, given by Theorem \ref{thm:Stationary-Solutions-Existence}. Then, whenever the dispersal and diffusion rates $d_v, d_w$ are sufficiently small, the solution $(V, W)$ is linearly, exponentially stable in $L^2(\Omega) \times L^2(\Omega)$.
\end{thm}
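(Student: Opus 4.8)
The plan is to linearize the full Klausmeier system \eqref{eq:u}--\eqref{eq:v_init} around the stationary solution $(V, W)$ and show that the spectrum of the linearized operator lies in a left half-plane $\{\mathrm{Re}\,\lambda \le -\omega\}$ for some $\omega > 0$, uniformly once $d_v, d_w$ are small enough. Writing $v = V + p$, $w = W + q$, the linearized system has the block form
\begin{align*}
    p_t &= d_v \mathcal{L} p + (2VW - B)\, p + V^2 q, \\
    q_t &= d_w \Delta q - q - 2VW\, p - V^2 q,
\end{align*}
with $p = 0$ outside $\Omega$ and $q = 0$ on $\partial\Omega$. I would first treat this as a perturbation of the decoupled problem obtained at $d_v = d_w = 0$ in the vegetation and water parts respectively: in that limit $\mathcal{L}$ and $\Delta$ contributions vanish, $W \to w_*$, $V \to v_*$, and the reaction matrix becomes the constant Jacobian
$J_* = \begin{pmatrix} 2v_*w_* - B & v_*^2 \\ -2v_*w_* & -1 - v_*^2 \end{pmatrix}$
of the kinetic system \eqref{EQ:ConstantSteadyStates} at $(v_*, w_*) = (v_{*,3}, w_{*,3})$. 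So the first substantive step is a purely algebraic one: check that $J_*$ is a stable matrix (trace negative, determinant positive) at the upper branch $v_* > 1$. Using $v_*w_* = B$ one computes $\mathrm{tr}\, J_* = 2B - B - 1 - v_*^2 = B - 1 - v_*^2$ and $\det J_* = (2B - B)(-1 - v_*^2) + 2v_*^2 B = -B - Bv_*^2 + 2Bv_*^2 = B(v_*^2 - 1) > 0$ since $v_* > 1$; and one checks $\mathrm{tr}\, J_* < 0$ using the explicit formula \eqref{EQ:ThirdSS} for $v_*$ (here one may need the hypothesis $A > 2B$ together with $v_* > 1$, possibly after a short case analysis — this is the one place I expect a minor calculation). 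Thus there is a spectral gap: $\mathrm{Re}\,\sigma(J_*) \le -2\omega_0 < 0$.

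Next I would assemble the linearized operator $\mathcal{M}_{d_v, d_w}$ on the Hilbert space $L^2(\Omega) \times L^2(\Omega)$ and argue that its spectrum is close to that of the limit operator. The diagonal parts $d_v \mathcal{L}$ and $d_w\Delta$ are dissipative (nonpositive self-adjoint, by the spectral facts cited for $\mathcal{L}$ in Lemma \ref{lem:A_SEMIGROUP_GENERATOR} and for $\Delta$ with Dirichlet conditions), so they only help; the point is that the reaction part, viewed as a multiplication/bounded operator, converges in operator norm to the constant-coefficient operator with matrix $J_*$ as $d_v, d_w \to 0$. Indeed $\|V - v_*\|_\infty \le R_* \to 0$ and $\|W - w_*\|_\infty \to 0$ by Lemma \ref{lem:stationary_v_estimate_v_star} and Theorem \ref{thm:Stationary-Solutions-Existence}, so each entry of the reaction matrix converges uniformly to the corresponding entry of $J_*$. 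For the constant-coefficient operator $d_v\mathcal{L}\oplus d_w\Delta + J_*$ one can diagonalize simultaneously in a joint orthonormal basis adapted to $\mathcal{L}$ and $\Delta$ — or, more robustly, estimate the numerical range: for any $(p,q)$ of unit norm, $\mathrm{Re}\langle (d_v\mathcal{L}\oplus d_w\Delta)(p,q),(p,q)\rangle \le 0$ while $\mathrm{Re}\langle J_*(p,q),(p,q)\rangle$ is controlled by the symmetrized matrix $\tfrac12(J_* + J_*^\top)$; if that symmetrization is itself negative definite we are done immediately, and otherwise one applies a fixed linear change of variables (a Lyapunov transformation putting $J_*$ into a form with negative-definite symmetric part, which exists precisely because $J_*$ is stable) and works with the equivalent inner product. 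Either way one gets $\mathrm{Re}\,\sigma(d_v\mathcal{L}\oplus d_w\Delta + J_*) \le -\omega_0$.

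Finally, I would combine the two ingredients: $\mathcal{M}_{d_v,d_w} = (d_v\mathcal{L}\oplus d_w\Delta + J_*) + E_{d_v,d_w}$ where $\|E_{d_v,d_w}\| \to 0$, so by standard perturbation of the spectrum of a generator (the semigroup generated by $\mathcal{M}_{d_v,d_w}$ satisfies a bound $\|e^{t\mathcal{M}}\| \le C e^{(-\omega_0 + \|E\|)t}$ via the bounded-perturbation theorem for $C_0$-semigroups, e.g.\ \cite[Theorem III.1.3]{EngelNagel2000}) we obtain $\mathrm{Re}\,\sigma(\mathcal{M}_{d_v,d_w}) \le -\omega_0/2 =: -\omega < 0$ once $d_v, d_w$ are small enough that $\|E_{d_v,d_w}\| < \omega_0/2$. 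This is exactly linear exponential stability of $(V,W)$ in $L^2(\Omega)\times L^2(\Omega)$. The main obstacle I anticipate is the middle step — controlling the spectrum of the coupled constant-coefficient operator $d_v\mathcal{L}\oplus d_w\Delta + J_*$ when $J_*$ is stable but not symmetric, since the diagonal dissipative parts do not commute with $J_*$ in general; the cleanest fix is the Lyapunov-transformation/equivalent-inner-product device noted above, after which the rest is routine semigroup perturbation theory. A secondary, smaller obstacle is verifying $\mathrm{tr}\,J_* < 0$ at $v_{*,3}$ from the explicit root formula, which may require invoking $A > 2B$ in a short inequality argument.
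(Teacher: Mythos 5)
Your route --- linearizing the full coupled system and treating it as a perturbation of the kinetic Jacobian $J_*$ --- is genuinely different from the paper's, and it founders on exactly the step you dismissed as a minor calculation. The determinant computation is fine, but the trace condition $\operatorname{tr} J_* = B - 1 - v_*^2 < 0$ does \emph{not} follow from the hypotheses $A > 2B$ and $v_* = v_{*,3} > 1$. Since $v_{*,3} = \frac{A + \sqrt{A^2 - 4B^2}}{2B} \to 1$ as $A \downarrow 2B$, the trace tends to $B - 2$; so for any $B > 2$ and $A$ slightly above $2B$ (for instance $B = 10$, $A = 21$, giving $v_{*,3} \approx 1.37$ and $\operatorname{tr} J_* \approx 7.1$) the kinetic equilibrium is unstable --- this is the well-known Hopf instability of the vegetated state in the Klausmeier kinetics at large mortality. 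In that regime the limit operator $d_v\mathcal{L}\oplus d_w\Delta + J_*$ is itself unstable, so no perturbation argument or Lyapunov change of inner product can produce a spectral gap, and your strategy collapses unless you add a hypothesis such as $B \le 2$ (which forces $1 + v_*^2 > 2 \ge B$).

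The paper avoids this wall by never linearizing the full two-component kinetics. It works with the reduced scalar nonlinearity $f(v) = v^2 W(v) - Bv$, in which the water is adiabatically slaved to the vegetation through the stationary solution map $W(v)$ of Lemma \ref{lem:existence_of_inhomogeneous_elliptic_v}; Proposition \ref{prp:nonlinearity_f_properties_at_v_star} gives $f'(v_*) \to B\frac{1 - v_*^2}{1 + v_*^2} < 0$ as $d_w \to 0$, with no restriction on the size of $B$, and an $L^2$ energy estimate then yields exponential decay of the vegetation perturbation $\varphi$. The water perturbation $\psi$ is handled afterwards by a separate energy estimate in which the coupling term $W(V+v)\varphi\psi$ is absorbed via an $\epsilon$-Cauchy inequality using the already-established decay of $\varphi$. (Whether that reduced linearization faithfully represents linear stability of the coupled system in the regime where $J_*$ is Hopf-unstable is a separate question, but it is the paper's argument.) To salvage your approach you must either restrict the parameters so that $J_*$ is a stable matrix, or replicate the paper's slaving of the water variable before linearizing.
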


\begin{proof}
    Let $(V, W)$ be a solution constructed in Theorem \ref{thm:Stationary-Solutions-Existence}. We set $f(v) = v^2W(v) - Bv$ as in Proposition \ref{prp:nonlinearity_f_properties_at_v_star}. Then, we linearize the vegetation problem \eqref{eq:u}, \eqref{eq:u_bound} and \eqref{eq:Unit} around $V$ and introduce a peturbation $\varphi(x,t) = V(x) - v(x,t)$. This new variable satisfies the following linearized problem
    \begin{numcases}{}
        \varphi_t = d_v\mathcal{L} \varphi + f'(V)\varphi & \hspace{2mm} $(x, t) \in \Omega \times [0, \infty],$ \label{eq:varphi} \\
        \varphi = 0 & \hspace{2mm} $(x,t) \in \mathbb{R}^n \setminus \Omega  \times [0,\infty], $ \label{eq:varphi_bound} \\
        \varphi(0) = V - v_0 & \hspace{2mm} $x \in \Omega .$ \label{eq:varphi_init}
    \end{numcases}
    Let $\beta_1 > 0$ be the principle eigenvalue of $-\mathcal{L}$, applying the standard energy method, we get that
    \begin{equation}
        \label{eq:v_energy_estimate_1}
        \frac{1}{2}\frac{d}{dt} \int_\Omega \varphi^2 \:dx
        \leq
        -d_v \beta_1 \int_\Omega \varphi^2\:dx + \int_\Omega f'(V) \varphi^2\:dx.
    \end{equation}
    Assuming that $d_w$ is sufficiently small, we deduce by continuity argument and Proposition \ref{prp:nonlinearity_f_properties_at_v_star} that the second integral is negative, hence inequality \eqref{eq:v_energy_estimate_1} yields
    \begin{equation}
        \label{eq:v_energy_estimate_2}
        \frac{1}{2}\frac{d}{dt} \int_\Omega \varphi^2 \:dx
        \leq
        -d_v \beta_1 \int_\Omega \varphi^2\:dx.
    \end{equation}
    The differential inequality \eqref{eq:v_energy_estimate_2}, when solved gives the estimate
    \begin{equation}
        \label{eq:v_energy_estimate_3}
        \int_\Omega \varphi^2 \:dx
        \leq
        \int_\Omega(V-v_0)^2\:dx\:  e^{-2 d_v \beta_1 t} =: M_0 e^{-2 d_v \beta_1 t}.
    \end{equation}    
    Hence $V$ is linearly exponentially stable. Now, consider a perturbation of the solution $W$, given by $\psi(x,t) = W(x) - w(x,t)$. The corresponding evolution problem is given by
    \begin{numcases}{}
        \psi_t = d_w\Delta \psi - \psi -(V^2 W - v^2 w) & \hspace{2mm} $(x, t) \in \Omega \times [0, \infty],$ \label{eq:psi} \\
        \psi = 0 & \hspace{2mm} $(x,t) \in \partial \Omega  \times [0,\infty], $ \label{eq:psi_bound} \\
        \psi(0) = W - w_0 & \hspace{2mm} $x \in \Omega .$ \label{eq:psi_init}
    \end{numcases}
    First, let us re-write the non-linear term appearing in equation \eqref{eq:psi}
    \begin{equation}
        \label{eq:w_non-linear_term}
        V^2W - v^2w 
        =
        (V - v)(V + v) W + v^2 (W - w)
        =
        \varphi (V + v) W + v^2 \psi.
    \end{equation}
    Let $\lambda_1 > 0$ be the principle eigenvalue of $-\Delta$, the energy method yields the following estimate
    \begin{equation}
        \label{eq:w_energy_estimate_1}
        \frac{1}{2}\frac{d}{dt}\int_\Omega \psi^2\:dx
        \leq
        - d_w \lambda_1 \int_\Omega \psi^2\:dx - \int_\Omega (1 + v^2) \psi^2\:dx - \int_\Omega W (V + v) \varphi \psi \:dx
    \end{equation}
    Since $v, V, W \in L^\infty(\Omega)$ and $v$ is global solution, we set $C = \sup\{\|W(V+v)\|_\infty : t > 0\}$. Applying $\epsilon$-Cauchy inequality $ab \leq \frac{\epsilon}{2}a^2 + \frac{1}{2\epsilon} b^2$ to the last term we obtain and estimating with inequality \eqref{eq:v_energy_estimate_3} we obtain
    \begin{equation}
        \label{eq:w_energy_estimate_2}
        \int_\Omega W(V+v) \varphi \psi \:dx
        \leq
        \frac{C \epsilon}{2}\int_\Omega \psi^2 \:dx + \frac{C}{2 \epsilon} \int_\Omega \varphi^2 \:dx 
        \leq
        \frac{C \epsilon}{2}\int_\Omega \psi^2 \:dx + \frac{C M_0}{2 \epsilon} e^{-2d_v\beta_1 t}
    \end{equation}
    Combining inequalities \eqref{eq:w_energy_estimate_1} and \eqref{eq:w_energy_estimate_2} we get the differential inequality
    \begin{equation}
        \label{eq:w_energy_estimate_3}
        \frac{1}{2}\frac{d}{dt}\int_\Omega \psi^2 \:dx
        \leq
        \left( -d_w \lambda_1 - 1 + \frac{C\epsilon}{2} \right) \int_\Omega \psi^2\:dx + \frac{C M_0}{2\epsilon} e^{-2d_v \beta_1 t}
    \end{equation}
    Solving differential inequality \eqref{eq:w_energy_estimate_3} we obtain
    \begin{equation}
        \label{eq:w_energy_estimate_4}
        \int_\Omega \psi^2 \:dx \leq \left( \int_\Omega (W-w_0)^2 \:dx \right) e^{-\mu t} + \frac{C M_0}{\epsilon (\mu - 2d_v \beta_1)} \left( e^{-2d_v \beta_1 t} - e^{-\mu t} \right),
    \end{equation}
    where we denoted constant $\mu = 2(d_w\lambda_1 + 1) - C\epsilon$, which is positive provided that $\epsilon < \frac{2(d_w \lambda_1 + 1)}{C}$. Since choice of $\epsilon > 0$ was arbitrary, we proved that $W$ is exponentially stable.
\end{proof}

Thus far, we have derived the asymptotic behavior through theoretical analysis. We now proceed to validate and extend these results using numerical experiments.

\section{Numerical experiments}
We have designed numerical experiments addressing our main results in Theorems \ref{thm:Stationary-Solutions-Existence}, \ref{thm:Exinction-Of-Vegetation}, and \ref{thm:Stationary-Solutions-Stability}, which summarize as follows: 

The non-zero, constant vegetation and water profiles $(v_*, w_*)$ from Proposition \ref{prp:ExistenceOfConstantSSKineticSystem} can be perturbed in such a way that the stationary problem \eqref{eq:u}--\eqref{eq:v_init} admits a non-zero, linearly, asymptotically stable stationary solution $(V, W)$ in its neighborhood, whenever: 
\begin{itemize}
    \item[(A1)] The dispersal rates $d_v > 0, d_w > 0$ are sufficiently small, matching the assumptions of Theorems \ref{thm:Stationary-Solutions-Existence} and \ref{thm:Stationary-Solutions-Stability}.
    \item[(A2)] We assume the dispersal rate $d_v > 0$ and domain $\Omega$ to be such that the principal eigenvalue $\beta_1$ of $\mathcal{L}$ satisfies \eqref{eq:d_v_condition_1}. Note that by Remark \ref{rem:dispersal_patch_condition} this means that $\Omega$ is sufficiently large.
    \item[(A3)] The parameters $A > 2B$, $B > 0$ of the Klausmeier model \eqref{eq:u}--\eqref{eq:v_init} and the constant solutions $(v_*, w_*)$ from Proposition \ref{prp:ExistenceOfConstantSSKineticSystem} satisfying $v_* > 1$.
\end{itemize}
For demonstrations in the one-dimensional case, we consider the intervals $\Omega = (-L, L)$ with suitable $L > 0$ and eigenfunctions of $\Delta$ with Dirichlet boundary condition given by $\phi_j(x) = \frac{1}{\sqrt{L}}\sin(\frac{j\pi x}{L})$, which are uniformly bounded by the constant $C = \frac{1}{\sqrt{L}}$. Hence assumptions \ref{O1}--\ref{O2} are satisfied. Further, we model the dispersal kernel $J$ by sub- and super-Gaussian densities 
\begin{equation}
    \label{eq:1dGaussian_PDF}
    J_1(x) = \frac{1}{\sqrt{2}} e^{-\sqrt{2}|x|}, \quad
    J_2(x) = 2 \left(\frac{\Gamma(3/4)^{1/2}}{\Gamma(1/4)^{3/2}}\right) e^{- \left( \frac{\Gamma(3/4)}{\Gamma(1/4)}\right)^2 x^4}, \quad x \in \mathbb{R}.
\end{equation}
Kernels $J_1$ and $J_2$ satisfy assumptions \ref{J1}--\ref{J5}. We shall compare our results to the classical reaction-diffusion system. In order to do that properly, we examine the Taylor expansion of the dispersal operator $\mathcal{L}$ with given kernel $J$:
\begin{equation}
    \label{eq:dispersal_op_taylor_expansion}
    \mathcal{L} v = \frac{1}{2}\left(\int_{\mathbb{R}} J(z)z^2dz\right) \Delta v + \text{h.o.t.}
\end{equation}
With our choice of the kernels $J_1, J_2$ we have that their variances are equal and satisfy the equation
\begin{equation}
    \label{eq:experiment_kernels_equal_variance}
    \int_{\mathbb{R}} J_1(z)z^2 dz = \int_{\mathbb{R}} J_2(z)z^2 dz = 1.
\end{equation}
Consequently, we use the local counterpart of the non-local model \eqref{eq:u}--\eqref{eq:v_init} to examine the effect of the higher-order terms (h.o.t.), given by
\begin{numcases}{}
    v_t = \frac{d_v}{2} \Delta v + v^2 w - B v & \hspace{2mm} $(x, t) \in \Omega \times [0, T],$ \label{eq:local_u} \\
    w_t = d_w\Delta w - v^2w - w + A & \hspace{2mm} $(x, t) \in \Omega \times [0, T],$ \label{eq:local_v} \\
    v = 0 & \hspace{2mm} $(x,t) \in \partial\Omega \times [0,T], $ \label{eq:local_u_bound} \\
    w = 0 & \hspace{2mm} $(x,t) \in \partial\Omega \times [0,T],$ \label{eq:local_v_bound} \\
    v(0) = v_0 & \hspace{2mm} $x \in \Omega ,$ \label{eq:local_u_init} \\
    w(0) = w_0 & \hspace{2mm} $x \in \Omega\,, $ \label{eq:local_v_init}
\end{numcases}

Now, we are ready to discuss the numerical experiments. We begin with the examination of the impact of the patch half-width $L$ on the stationary solutions.

\begin{expe}
    We design a numerical experiment to investigate the existence of a critical patch size for vegetation survival, a concept discussed in our theoretical analysis (see Remark \ref{rem:dispersal_patch_condition}). The experiment's primary goal is to compare the critical patch size of the classical local (reaction-diffusion) model against two distinct non-local models. These non-local models employ dispersal kernels with ``thin tails'' (super-Gaussian) and ``fat tails'' (sub-Gaussian), respectively. Crucially, both non-local kernels are calibrated to have the same variance (second moment), ensuring that their leading-order Taylor approximations are equivalent to the same Laplacian operator. This setup allows us to directly isolate and observe the influence of higher-order terms, which represent the shape of the dispersal kernel, on population persistence.
    
    The simulation framework is configured as follows:
    \begin{itemize}
        \item By Proposition \ref{prp:ExistenceOfConstantSSKineticSystem}, we assume that $A > 2B$ to guarantee the existence of non-zero, spatially uniform vegetation and water solutions $(v_*, w_*)$ of the system \eqref{EQ:ConstantSteadyStates}. In particular, we take mortality $B = 0.45$ and rainfall $A = 1.8$. For this choice of parameters we get that $(v_{*, 3}, w_{*, 3})$ satisfies (A3).
        \item We consider fixed dispersal rate $d_v = 2.0$ and diffusion rate $d_w = 0.1$.
        \item We choose the range of patch half-widths $L \in [10^{0}, 10^2]$. We split this interval into $50$ points equally distributed along the logarithmic scale.
    \end{itemize}
    For each value of $L$, we find the stationary solution for all three models by simulating the system's dynamics. The simulation is performed using a forward Euler time-stepping scheme with a time step of $h_t = 10^{-4}$. The system is considered to have reached a steady state when the $\ell^2$-norm of the difference between consecutive time steps falls below a tolerance of $\epsilon_{\text{tol}} = 10^{-5}$. To encourage convergence to non-trivial solutions, the initial condition for each simulation is given by the spatially uniform steady state $(v_{*,3}, w_{*,3})$, which is then perturbed by a small cosine profile. The spatial domain is discretized using a number of equidistant nodes $N$ that increases proportionally with $L$ to maintain adequate numerical resolution across all scales.

    To quantify the results, we record the average biomass density of the final stationary solution for each $L$. The critical patch size, $L_{\text{crit}}$, is then numerically determined for each model as the largest patch half-width for which the average biomass falls below a small threshold of $0.1$.
\end{expe}
We present the results of our numerical simulations in the figure below.
\begin{figure}[H]
    \centering
    \includegraphics[width=\textwidth]{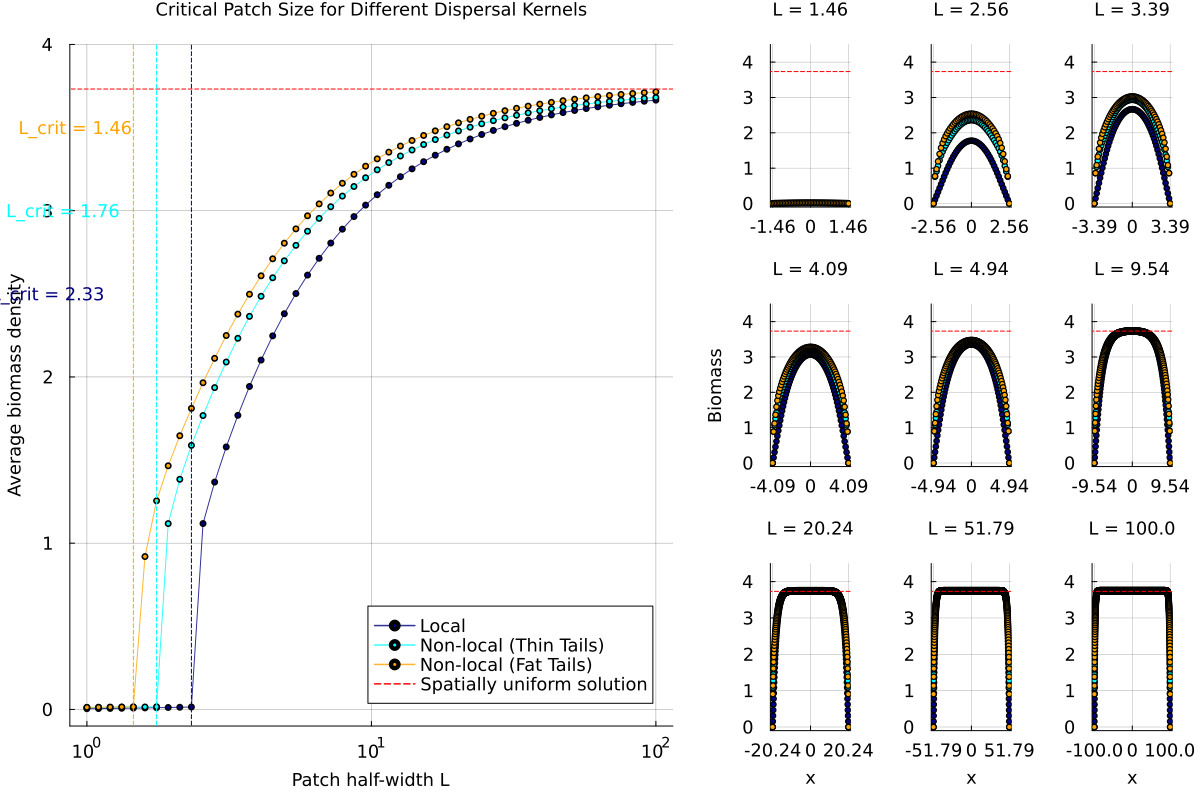}
    \caption{In the left panel, we show the spatially uniform steady-state solution $v_{*,3}$ (red dashed line), the average stationary biomass in the non-local models with thin and fat tails (cyan and orange, respectively) and in the local model (blue) as a function of the patch half-width $L$. We also present the corresponding, numerically estimated critical patch sizes (dashed vertical lines). In the right panel, we present a gallery of stationary biomass density profiles (orange, cyan and blue dots) for different patch half-widths $L$, corresponding to the results shown in the left panel.}
    \label{fig:critical_patch_size}
\end{figure}
\begin{con}
As shown in the left panel of Figure \ref{fig:critical_patch_size}, all three models—local, non-local with thin tails, and non-local with fat tails—exhibit the same qualitative behavior for large domains. Specifically, their respective stationary solutions converge to the spatially uniform steady state $v_{*, 3}$ as the patch half-width $L$ increases and boundary effects become negligible. This agrees with our existence and stability results contained in Theorems \ref{thm:Stationary-Solutions-Existence} and \ref{thm:Stationary-Solutions-Stability}.
\end{con}

\begin{con}
The left panel of Figure \ref{fig:critical_patch_size} reveals that, while all models eventually collapse to the desert state, the critical patch size required for survival is different for each. The model with the fat-tailed (sub-Gaussian) kernel is the most resilient, exhibiting the smallest critical patch size ($L_{\text{crit}} \approx 1.46$). This is followed by the thin-tailed (super-Gaussian) kernel ($L_{\text{crit}} \approx 1.76$), while the local model is the least resilient, requiring the largest habitat for survival ($L_{\text{crit}} \approx 2.33$). Since all non-local kernels were calibrated to have the same variance, this ordering is directly attributable to the influence of higher-order moments. This provides evidence that long-range dispersal events, which are more frequent in kernels with a larger fourth moment (kurtosis), significantly enhance the ability of vegetation to persist in smaller, fragmented habitats.
\end{con}

\begin{con}
The right panel of Figure \ref{fig:critical_patch_size} shows a difference in the solution profiles at the domain boundary. The local model's solution, governed by diffusion, is continuous and smoothly approaches zero. In contrast, both non-local models exhibit a sharp drop-off at the boundary. This is a direct consequence of Theorem \ref{thm:Stationary-Solutions-Existence}, namely equation \eqref{eq:f_ball_to_ball_radius} defining the distance $R_*$ between the constant value $v_{*, 3}$ and the stationary solution $V$. Note that for small diffusion and dispersal rates $d_v, d_w$ it is true that $R_* < v_{*, 3}$, directly implying discontinuity at the boundary.
\end{con}

\begin{expe}
    We propose an experiment in which we investigate the existence of a critical level of concentration of the vegetation as in Theorem \ref{thm:Exinction-Of-Vegetation}. We proceed by computing the bifurcation diagram for the local and non-local Klausmeier model. Having that, we will check if the branches are bounded from below by $\sim\frac{B}{A}$. In the regime of decreasing rainfall $A$ the stationary vegetation $V$ obtained by Theorems \ref{thm:Stationary-Solutions-Existence} and \ref{thm:Stationary-Solutions-Stability} ceases to exist once $A = 2B$. We also want to validate whether there exists a regime of parameters $d_v, d_w$ such that inhomogeneous solutions emerge. We assume the following framework of the experiment:
    \begin{itemize}
        \item We vary the rainfall $A \in [0.1, 3.0]$ with fixed mortality $B = 0.45$.
        \item We consider the dispersal rate $d_v = 2.0$ and diffusion rate $d_w \in \{0.1, 80.0\}$.
        \item We take the patch half-width $L = 25$.
    \end{itemize}
    The bifurcation diagrams were generated using the Julia package \texttt{BifurcationKit.jl}. The numerical method relies on a Pseudo-Arclength Continuation (PALC) algorithm to trace solution branches. For the spatial discretization, the domain $\Omega = (-L, L)$ is discretized using $N = \lfloor 3L \rfloor$ equidistant nodes. The Laplacian operator $\Delta$ is approximated using a standard second-order centered finite difference scheme, and the non-local integral operator $\mathcal{L}$ is approximated using the composite trapezoidal rule. The convergence tolerance $\epsilon$ for the underlying Newton solver was set to $\epsilon = 10^{-10}$.

    The continuation process was initiated from the parameter value $A = 3.0$. The initial guess was constructed from the spatially homogeneous steady state $(v_0, w_0) = (v_{*, 3}, w_{*, 3})$, as given by Proposition \ref{prp:ExistenceOfConstantSSKineticSystem}, with a small cosine-shaped spatial perturbation applied to encourage convergence to non-uniform solution branches.
\end{expe}

We present the results of numerical simulations done in the regime of slow diffusion, e.g.\ with $d_w = 0.1$.

\begin{figure}[H]
    \centering
    \includegraphics[width=\textwidth]{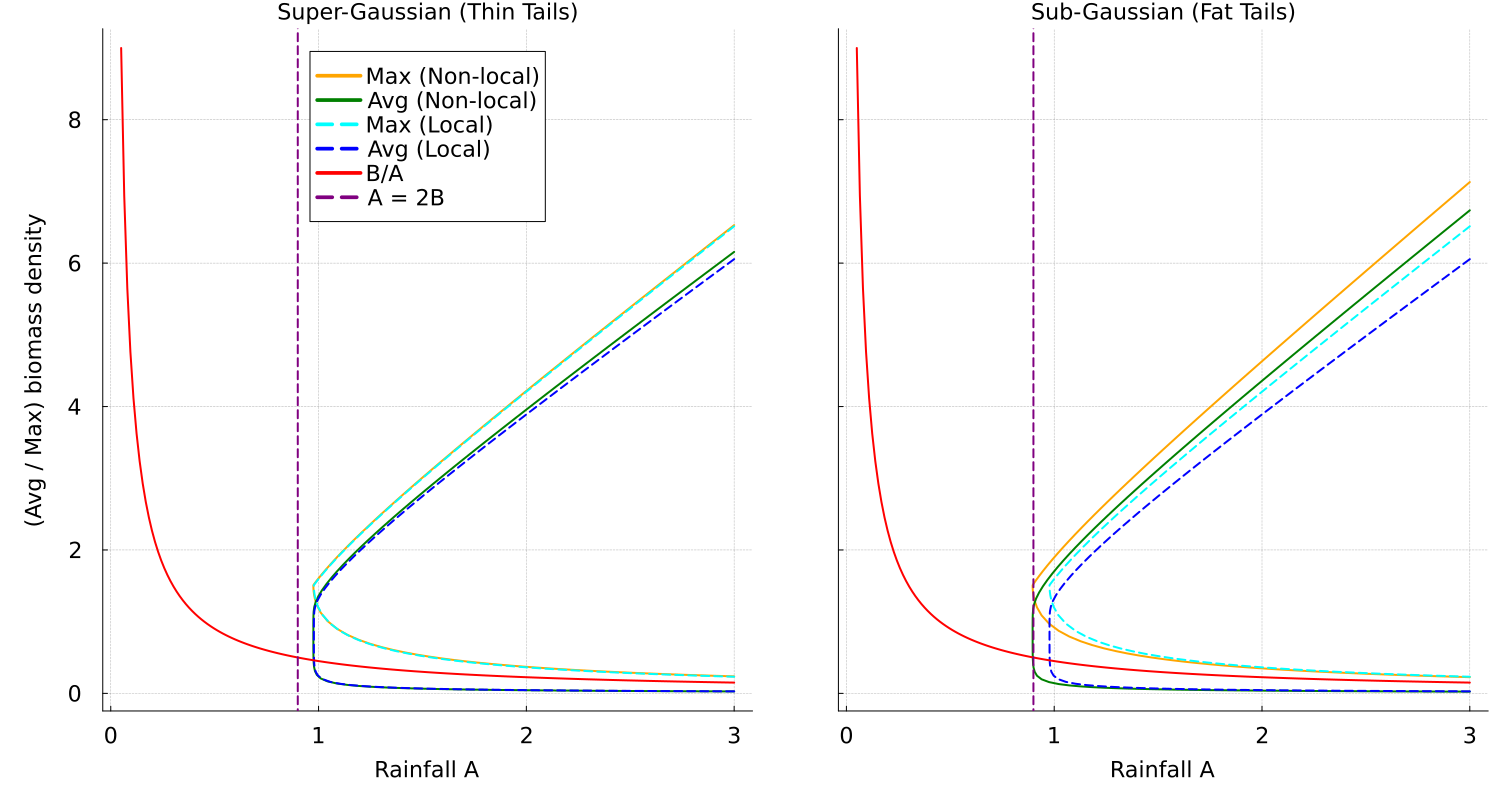}
    \caption{Comparison of bifurcation diagrams for the non-local models using a super-Gaussian (thin tails) and a sub-Gaussian (fat tails) dispersal kernel, both shown against the standard local model. Solid orange and green lines represent the maximum and average biomass for the non-local models, while dashed cyan and blue lines show the same for the local model. We mark the critical biomass threshold (red line, $\sim B/A$) and the critical rainfall for the kinetic system (purple line, $A=2B$). Here $d_v = 2.0, d_w = 0.1, B = 0.45$.}
    \label{fig:bifurcation_diagram_slow}
\end{figure}

\begin{con}
    The results shown in Figure \ref{fig:bifurcation_diagram_slow} align with our theoretical results from Theorem \ref{thm:Exinction-Of-Vegetation}. In particular, every stationary solution has the maximal value bounded from below by the value $\sim B/A$.
\end{con}

\begin{con}
    Our simulation confirms the statement of Theorems \ref{thm:Stationary-Solutions-Existence}, \ref{thm:Stationary-Solutions-Stability}, namely the existence of a stable, spatially perturbed steady state up to the point when the rainfall $A$ approaches the value $A = 2B$. When $A < 2B$, the spatially uniform solutions $(v_{*, 3}, w_{*, 3})$ cease to exist and the vegetation goes extinct.
\end{con}

Finally, we show the results of numerical simulations done in the regime of fast diffusion, e.g.\ with $d_w = 80.0$.

\begin{figure}[H]
    \centering
    \includegraphics[width=\textwidth]{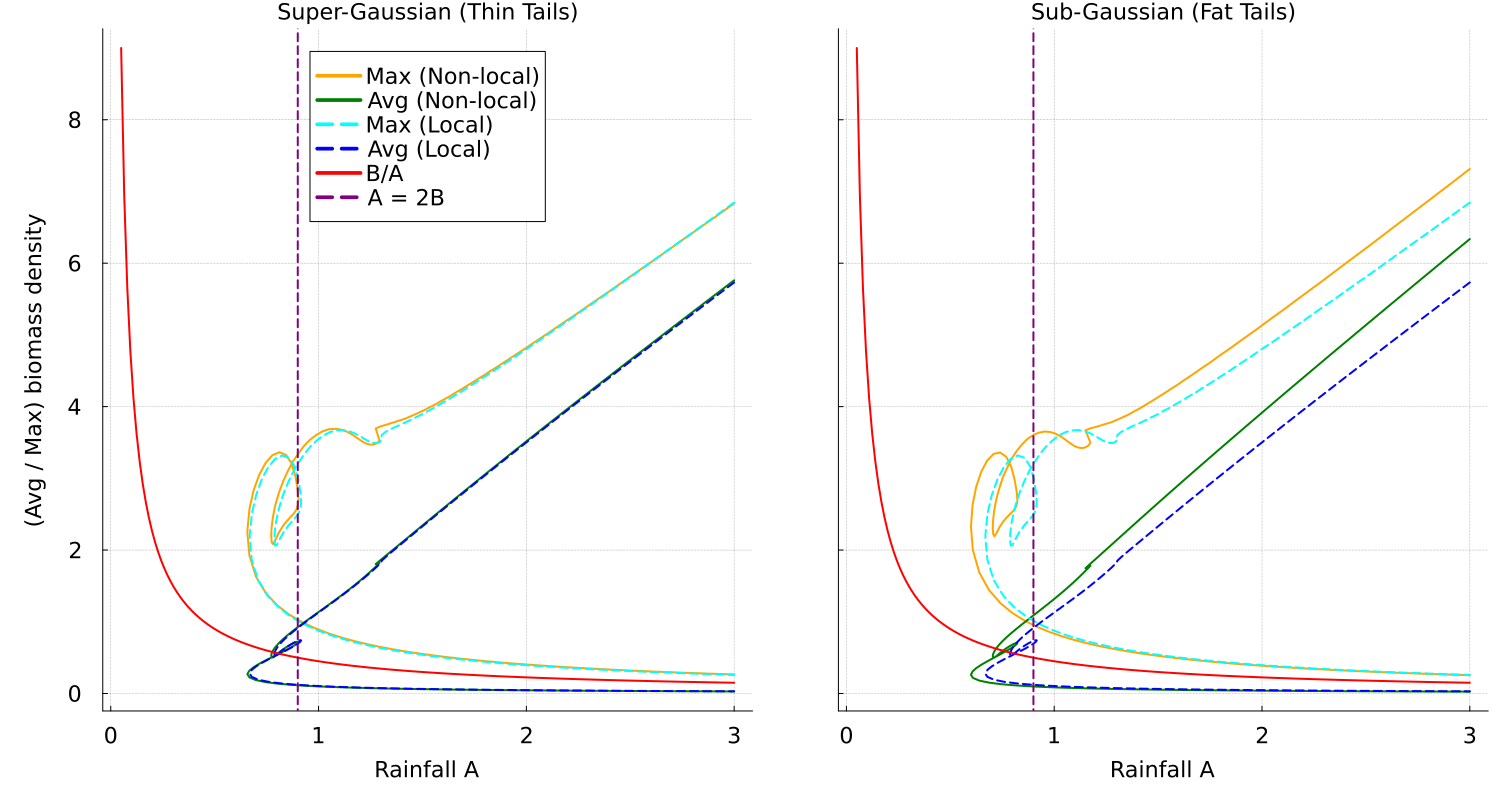}
    \caption{Comparison of bifurcation diagrams for the non-local models using a super-Gaussian (thin tails) and a sub-Gaussian (fat tails) dispersal kernel, both shown against the standard local model. Solid orange and green lines represent the maximum and average biomass for the non-local models, while dashed cyan and blue lines show the same for the local model. We mark the critical biomass threshold (red line, $\sim B/A$) and the critical rainfall for the kinetic system (purple line, $A=2B$). Here $d_v = 2.0, d_w = 80.0, B = 0.45$.}
    \label{fig:bifurcation_diagram_fast}
\end{figure}

\begin{con}
    A comparison of the bifurcation diagrams in Figure \ref{fig:bifurcation_diagram_fast} reveals the impact of the dispersal kernel's shape on the system's dynamics. While both the super-Gaussian (thin tails) and sub-Gaussian (fat tails) models support complex, looping solution branches, the sub-Gaussian kernel yields the existence of non-deserted solutions over a wider range of the rainfall parameter $A$. The oscillatory behavior suggests a cascade of secondary bifurcations. Notably, in both non-local scenarios, patterned solutions emerge for rainfall levels below the kinetic system's threshold ($A < 2B$).
\end{con}

\begin{con}
    Across all models presented in Figures \ref{fig:bifurcation_diagram_slow} and \ref{fig:bifurcation_diagram_fast}, the curve $\sim B/A$ serves as a sharp lower bound for the biomass of all non-trivial stationary solutions. This provides robust numerical validation for the vegetation extinction threshold predicted by Theorem \ref{thm:Exinction-Of-Vegetation}, confirming its applicability to both local and a range of non-local dispersal strategies.
\end{con}

For the sake of exposition we provide the spatial visualization of patterns obtained in Figure \ref{fig:bifurcation_diagram_fast}.

\begin{figure}[H]
    \centering
    \includegraphics[width=\textwidth]{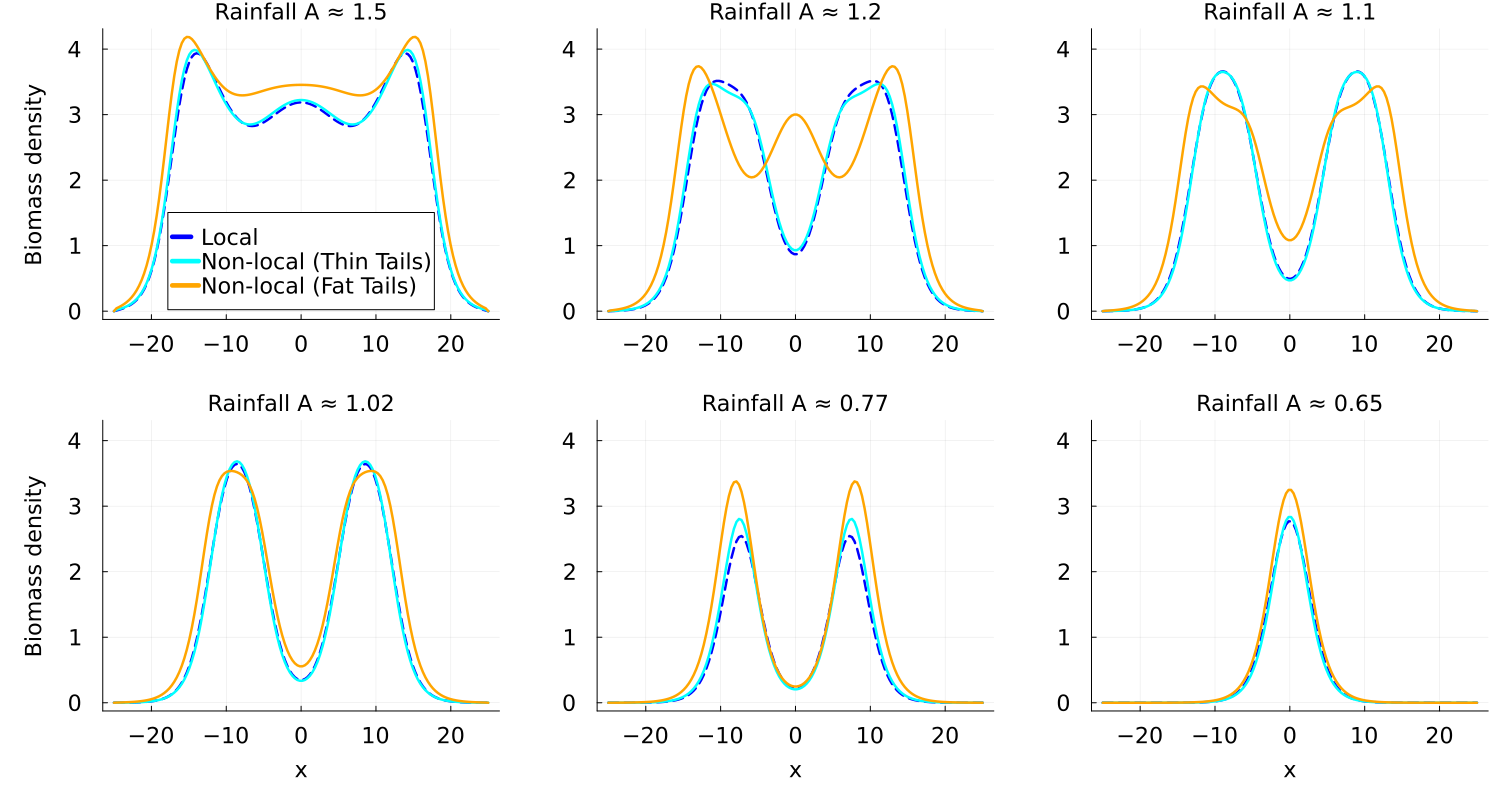}
    \caption{A gallery of stationary biomass densities for non-local and local models (green and blue dashed lines respectively). We selected several points from the upper branch of the rightmost bifurcation diagram in Figure \ref{fig:bifurcation_diagram_fast}. Here $d_v =2.0, d_w = 80.0, B = 0.45, L=25$.}
    \label{fig:bifurcation_profiles_fast}
\end{figure}

\begin{con}
    The bifurcation diagram for the fat-tailed kernel $J_2$ with patch half-width $L=25$ shown in Figure \ref{fig:bifurcation_diagram_fast} gives very similar loopy structures in both models. However, the vegetation profiles corresponding to this diagram, presented in Figure \ref{fig:bifurcation_profiles_fast}, do not always share the same geometry with respect to the local / non-local model, particularly for $A = 1.2$. It is also worth noting that, due to the high diffusion regime, the characteristic sharp drop-off at the boundary has disappeared.
\end{con}

\subsection{Final conclusions and future work}

In this work, we have established a rigorous mathematical framework for the Klausmeier model with non-local dispersal \eqref{eq:u}--\eqref{eq:v_init} on finite flat landscapes, proving the existence and stability of non-zero stationary solutions (Theorems \ref{thm:Stationary-Solutions-Existence} and \ref{thm:Stationary-Solutions-Stability}) and deriving conditions for vegetation extinction (Theorem \ref{thm:Exinction-Of-Vegetation}). 

Unexpectedly, our numerical experiments revealed that non-local models can sustain vegetation on significantly smaller habitats compared to the classical reaction-diffusion counterpart \eqref{eq:local_u}--\eqref{eq:local_v_init}, as illustrated in Figure \ref{fig:critical_patch_size}.
This finding challenges the intuitive logic that non-local dispersal, by facilitating long-range transport, should increase the flux of biomass through the boundaries into the uninhabitable desert, thereby making extinction more likely.

We attempted to numerically investigate the underlying causes of this enhanced resilience using the theoretical criteria derived in this paper, specifically the spectral condition in Lemma \ref{lem:apriori_smallnes_d_v} (discussed in Remark \ref{rem:dispersal_patch_condition}) and the expression for the critical radius given by formula \eqref{eq:f_ball_to_ball_radius}. 
However, due to the highly implicit nature of these conditions—particularly the complex dependence of the principal eigenvalue and the perturbation radius on the specific kernel shape—we were unable to isolate a simple analytical reason for why the critical patch size decreases for more non-local operators.

In future work, we intend to investigate this problem much further through both extensive numerical experimentation and theoretical analysis. 
This will also include providing a precise proof for the existence of non-constant, patterned solutions in the regime $A < 2B$, where our numerical results (see Figure \ref{fig:bifurcation_diagram_fast}) suggest vegetation persists even after the uniform solution has collapsed.

\newpage

\bibliographystyle{siam}
\bibliography{bibliography}

\end{document}